%
%
%
%
%
%
\RequirePackage{fix-cm}
\documentclass{svjour3}                     
\smartqed  
\usepackage{newcmd}
\usepackage[fleqn]{amsmath}
\usepackage{graphicx}
\usepackage{amssymb}
\usepackage{float}
\usepackage{mathrsfs}
\usepackage{comment}
\usepackage{mathtools}
\usepackage{yfonts}
\usepackage{subcaption}
\newtheorem{assumption}[theorem]{Assumption}
\usepackage{amsfonts}
\usepackage{newcmd}
\DeclareMathAlphabet{\mathpzc}{OT1}{pzc}{m}{it}
\DeclareSymbolFont{matha}{OML}{txmi}{m}{it}
\DeclareMathSymbol{\varv}{\mathord}{matha}{118}
\setlength{\parindent}{0pt}
\usepackage[dvipsnames]{xcolor}

\usepackage{setspace}
%
%
%
%
%
\begin{document}

\title{Discrete-time Rigid Body Pose Estimation based on Lagrange-d'Alembert principle 
}

\titlerunning{Discrete-time Rigid Body Pose Estimation}        

\author{Maulik Bhatt$^{*}$ \and Srikant Sukumar \and Amit K Sanyal}


\institute{M. Bhatt (Corresponding Author) \at
              Aerospace Engineering, Indian University of Illinois at Urbana-Champaign, IL, USA. \\
              Tel.: +1 (217) 377-8235\\
              \email{mcbhatt2@illinois.edu, maulikbhatt585@gmail.com}           
           \and
           S. Sukumar \at
              Systems and Control Engineering, Indian Institute of Technology Bombay, 400076, India.
             \and
             A.K. Sanyal \at
             Mechanical and Aerospace Engineering, Syracuse University, Syracuse, NY, USA.
}

\date{Received: date / Accepted: date}

\maketitle

\begin{abstract}
The problem of rigid body pose estimation is treated in discrete-time via discrete Lagrange-d'Alembert principle and discrete Lyapunov methods. The position and attitude of the rigid body are to be estimated simultaneously with the help of vision and inertial sensors. For the discrete-time estimation of pose, the continuous-time rigid body kinematics equations are discretized appropriately. We approach the pose estimation problem as minimising the energies stored in the errors of estimated quantities. With the help of measurements obtained through optical sensors, artificial rotational and translation potential energy-like terms have been designed. Similarly, artificial rotational and translation kinetic energy-like terms have been devised using inertial sensor measurements. This allows us to construct a discrete-time Lagrangian as the difference of the kinetic and potential energy like terms, to which a Lagrange-d'Alembert principle is applied to obtain an optimal pose estimation filter. The dissipation terms in the optimal filter are designed through discrete-Lyapunov analysis on a suitably constructed Morse-Lyapunov function and the overall scheme is proven to be almost globally asymptotically stable. The filtering scheme is simulated using noisy sensor data to verify the theoretical properties.
\keywords{Pose Estimation \and Lagrange-d’Alembert  principle \and Discrete-time Lyapunov Methods}
\end{abstract}

\section{Introduction}

The pose of a rigid body with respect to a frame is a transformation from a body-fixed frame to an inertial frame. The pose encapsulates the position of the center of mass and orientation of the rigid body. Estimation of the pose of a rigid body has various applications in the control of spacecraft, ground vehicles, underwater vehicles for example. Generally, the position and attitude are estimated with the help of onboard inertial sensors coupled with a dynamic model. However, when available, external measurements such as GPS or tracking data of multiple points on the body are also used for pose estimation (\cite{amelin2014algorithm,vasconcelos2008nonlinear,vertechy2007accurate}). Some techniques combine inertial sensors, vision sensors and external measurements to estimate the rigid body pose. Furthermore, it is common in several applications to operate in GPS denied environments. Therefore, an estimation scheme relying on inertial and vision sensors with proven stability properties and a large domain of attraction is necessary. Additionally, robustness to uncertainties and noise is required.\\

In recent times, several stable nonlinear estimators evolving on non-Euclidean spaces such as $\SO$ or $\SE$ have been presented with provably large domain of attraction. A landmark-based nonlinear pose observer is proposed in \cite{vasconcelos2007landmark} which is almost globally exponentially stable on $\SE$. The pose estimation scheme in \cite{rehbinder2003pose} uses line-based dynamic vision and inertial sensors to provide a locally convergent attitude observer and subsequently a position estimator. A quaternion based pose estimator is presented in \cite{filipe2015extended} where cost functions based on estimation errors are constructed in discrete-time and minimized to obtain a filtering scheme. The attitude estimation problem based on vector measurements was first proposed as an optimization problem on $\SO$ by Wahba in \cite{wahba1965least}. The cost function is known as Wahba's cost function. In \cite{vasconcelos2010nonlinear}, the authors devise a pose estimator using a Lyapunov function defined as the difference between the estimated and the measured landmark coordinates. For attitude estimation, similar ideas are used in \cite{mahony2008nonlinear,zamani2013minimum,izadi2014rigid,bhatt2020rigid}. In \cite{izadi2016rigid}, the authors applied the Lagrange-d'Alembert principle to a Lagrangian constructed through state estimation errors to obtain an optimal filtering scheme for the rigid body pose. However, the work in \cite{izadi2016rigid} provides a continuous-time pose estimator by applying the (continuous time) Lagrange-d'Alembert principle to a Lagrangian. The estimator is then discretized for numerical implementation, which voids the theoretical guarantee of asymptotic stability provided by the continuous-time estimator. In this work we obtain a discrete-time pose estimation scheme by applying the discrete Lagrange-d'Alembert principle on a discrete time Lagrangian. Furthermore, we also prove guaranteed asymptotic stability by performing the discrete-Lyapunov analysis of the system and prove almost global asymptotic stability. 
Discrete-time observers for \emph{only} attitude with stability properties can be found in \cite{bhatt2020optimal,bhatt2020rigid}. \\

In this paper, we derive an optimal pose estimation scheme by minimizing the ``energy'' stored in the state estimation errors. A discrete-time Lagrangian has been devised and the discrete Lagrange-d'Alembert principle from variational mechanics \cite{marsden2001discrete} is employed to obtain an \emph{optimal} filtering scheme. It is then proven to be \emph{almost globally asymptotically stable} via discrete-Lyapunov analysis. The pose of the rigid body is expressed in $\SE$ without employing any local coordinates (such as Euler angles or quaternion) and hence globally non-singular. Furthermore, the estimation scheme presented here relies only on on-board sensor data. We also do not make assumptions on the statistical properties of the measurement noise as is usually the case for Kalman filter-based estimation schemes. \\

This paper is organized as follows. In the section \ref{sec:2}, relevant notations are introduced and the procedure to estimate rigid body pose using measurements is explained. The continuous-time rigid body kinematics has been discretized in section \ref{sec:3}. Section \ref{sec:4} contains the application of variational mechanics to obtain a filter equation for pose estimation. The filter equations obtained in section \ref{sec:4} are proven to be asymptotically stable using the discrete-time Lyapunov method in section \ref{sec:5}. Filter equations are numerically verified with realistic measurements (corrupted by bounded noise) in section \ref{sec:6}. Finally, section \ref{sec:7} presents the concluding remarks and possible directions future work.


\section{Notation and Problem Formulation}\label{sec:2}

\subsection{Notation and Preliminaries}

We define the trace inner product on $\bR^{m\times n}$ as
\begin{equation*}
    \langle A_1,A_2\rangle := \text{trace}(A_1\T A_2).
\end{equation*}
The group of orthogonal frame transformations on $\bR^3$ is defined by $\mathrm{O}(3) := \{Q \in \bR^{3\times 3} \; | \; \text{det}(Q)=\pm 1\}$. The Special orthogonal group on $\bR^3$ is denoted as $\SO$ and defined as $\SO := \{R \in \bR^{3\times 3} \; | \; R\T R = RR\T = I_{3}\}$. Let there be some $R \in \SO$ and $b\in\bR^3$. The corresponding Lie algebra is denoted as $\mathfrak{so}(3) := \{ M \in \bR^{3\times 3} \; | \; M + M\T = 0\}$. The Special Euclidean group, $\SE$, corresponds to the set of all $4 \times 4$ transformation matrices of the form,
\begin{equation*}
    \SE := \left\{ \begin{pmatrix}R & b \\ 0 & 1  \end{pmatrix} \in \bR^{4\times4} \;\bigg\vert\; R \in \SO \mbox{ and } b \in \bR^3 \right\}.
\end{equation*} 
Let $(\cdot)^\times:\bR^3 \rightarrow \mathfrak{so}(3) \subset \bR^{3\times3}$ be the skew-symmetric matrix cross-product operator denoting the vector space isomorphism between $\bR^3$ and $\mathfrak{so}(3)$:
\begin{equation*}
    v^\times = {\begin{bmatrix} v_1 \\ v_2 \\ v_3 \end{bmatrix}}^{\times} := \begin{bmatrix} 0 & -v_3 & v_2 \\ v_3 & 0 & -v_1\\ -v_2 & v_1 & 0 \end{bmatrix}.
\end{equation*}
Further, let $\text{vex}(\cdot):\mathfrak{so}(3)\rightarrow\bR^3$ be the inverse of $(\cdot)^\times$. The map $\exp{(\cdot)}:\mathfrak{so}(3)\rightarrow \SO$ is defined as
\begin{equation*}
    \exp{(M)} := \sum_{i=0}^{\infty}\frac{1}{k!}M^k.
\end{equation*}
We define $Ad:\SO\times\mathfrak{so}(3)\rightarrow\mathfrak{so}(3)$ as
\begin{equation*}
    Ad_R\Omega^\times := R\Omega^\times R\T = (R\Omega)^\times.
\end{equation*}
For the remainder of the article, the phrase ``consider the time interval $[t_0,T]$'', indicates that the estimation process will be carried out over the time interval $[t_0,T]$ and is divided into $N$ equal sub-intervals $[t_i,t_{i+1}]$ for $i = 0,1,\dots,N$ with $t_N = T $. The time step size is denoted as, $h := t_{i+1} - t_i$. Further, given a state $z(t)$, $z_i:=z(t_i)$. Give a quantity $\beta$, $\beta^m$ denotes its measurement through an on-board sensor.

\subsection{Navigation using optical and inertial sensors}
Assume that a rigid body exhibits rotational and translation motion in an environment. The pose estimation of the rigid body implies estimation of the orientation and position of a frame $S$, fixed to the rigid body center of mass with respect to some inertial frame $O$, fixed to the observed environment as shown in figure \ref{fig:1}. Let $R \in \SO$ be the rotation matrix from $S$ to $O$ and $b \in \bR^3$ be the location of the origin of $S$ in the frame $O$. We can write the pose $g \in \SE$ of the rigid body as,
\begin{equation}
    g := \begin{bmatrix} R & b \\ 0 & 1 \end{bmatrix}.
\end{equation}
If there exists a column vector $\psi = [x \; y \; z]\T \in \bR^3$, then it can be represent as a column vector $\gamma=[x \; y \; z \; 1]\T$ in $\bR^3$ as a subspace of $\bR^4$. Furthermore, $g\in\SE$ acts on this vector by a combination of rotation and translation as follows:
$g \gamma= R\psi + b$. \\
\begin{figure}
    \centering
    \includegraphics[scale=1.1]{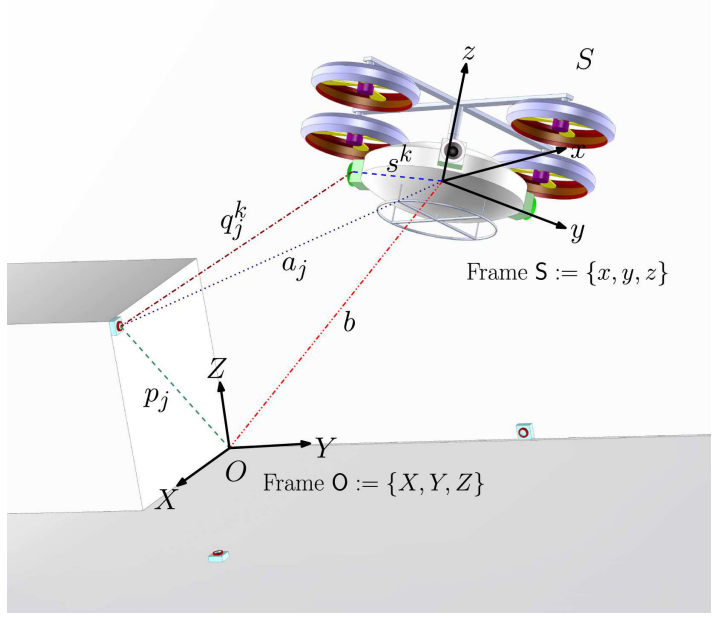}
    \caption{Inertial landmarks in frame O as observed from vehicle S with optical measurements. $O$ - inertial frame, $S$-body-fixed frame, $b$ - position of the center of mass of the body, $p_j$ - position of the $j^{\text{th}}$ beacon in frame $O$, $a_j$ - position of the $j^{\text{th}}$ beacon in frame $S$, $s^k$ - position of the $k^{\text{th}}$ optical sensor in frame $S$, $q_j^k$ - range from $k^{\text{th}}$ optical sensor to $j^{\text{th}}$ beacon}
    \label{fig:1}
\end{figure}

Assume that there are $r^g$ number of inertial vectors (such as gravity vector, magnetic field vector) whose locations in the frame $O$ are known (denoted as $e_j$ for $j= 1,2, \ldots, r^g$) and can be measured in the frame $S$ via inertial sensors (denoted as $e_j^s$). Furthermore, let there be beacons placed with their position vectors known ($r^o$ in number)in the inertial frame $O$ (denoted as $p_j$ for $j=1,2,\ldots,r^o$). The idea is to measure the locations of these beacons in the vehicle-fixed frame $S$ (denoted $a_j$) with the help of optical sensors (marked as green in figure \ref{fig:1}). 
 
 It is important to clarify at this stage that at any given discrete-time instant $t_i$, the number of observed beacons and inertial vectors by the vehicle could be varying. We therefore introduce the notations, $r_i^g$ and $r_i^o$ to denote the corresponding inertial and optical measurements available. It should be evident that $r_i^g \in \{1,2,\ldots,r^g\}$ and $r_i^o \in \{1,2,\ldots,r^o\}$. We therefore have $\binom{r_i^o}{2}$ unique relative position vectors, which are the vectors connecting any two of these optical beacon measurements. If two or more optical measurements are available, the number of vector measurements that can be used to estimate attitude are $\binom{r_i^o}{2} + r_i^g$. It has to be noted that attitude of the body can be uniquely computed only if $\binom{r_i^o}{2} + r_i^g \geq 2$, . If at least two inertial vector measurements are available then beacon measurements are not required to estimate attitude, however at least one beacon measurement is necessary for the estimation of relative position. It has been assumed that the velocities of the vehicle can be directly measured.

\subsubsection{Pose measurement model}
Employing the notation from figure \ref{fig:1}, at the time instant $t_i$, we obtain
\begin{equation}\label{eq:7}
    p_j = R(q_j^k + s^k) + b = Ra_j + b,
\end{equation}
in the absence of measurement noise. Here $j \in \{1,2,\ldots r_i^o\}$. In the presence of measurement noise, $a_j^m$ can be written as
\begin{equation*}
    a_j^m = (q_j^k)^m + s^k,
\end{equation*}
Let $\Bar{p} = \frac{1}{r_i^o}\sum_{j=1}^{r_i^o}p_j$ be the mean of vectors $p_j$, and $\Bar{a}^m = \frac{1}{r_i^o}\sum_{j=1}^{r_i^o}a_j^m$ be the mean of vectors $a_j^m$. We obtain the following relation from \eqref{eq:7}:
\begin{equation}\label{eq:9}
    \Bar{a}^m = R\T(\Bar{p}-b) + \zeta,
\end{equation}
where $\zeta$ is the additive
measurement noise obtained by averaging the measurement noise. As stated in the previous sub-section, we obtain $n := \binom{r_i^o}{2}$ relative vectors from optical measurements. They are denoted as $d_j = p_\lambda - p_l$ in $O$ and the corresponding vectors in the frame $S$ are denoted as $l_j = a_\lambda - a_l$ with $\lambda,l\in \{1,2,\ldots,r_i^o\}; \lambda\neq l$. We have
\begin{equation}\label{eq:10}
    d_j = Rl_j \Rightarrow D = RL.
\end{equation}
Putting them in the matrix form as $D = [d_1 \ldots d_n]$ and $L = [l_1 \ldots l_n] \in \bR^{3\times n}$, we obtain
\begin{equation}\label{eq:11}
    L^m = R\T D + \mathscr{L},
\end{equation}
where $\mathscr{L}\in\bR^{3\times n}$
consists of the additive noise in the vector measurements made in the body frame $S$.

\section{Discretization of Rigid body Kinematics}\label{sec:3}

Consider the time interval $[t_0,T]$. Let $\Omega \in \bR^3$ and $\varv\in \bR^3$ be the rotational and translational velocity of the rigid body respectively in frame $S$. $R \in \SO$ is the rotation matrix from body frame to inertial frame and $b \in \bR^3$ is the position of rigid body with respect to frame $O$ expressed in frame $S$. The generalized velocity of the rigid body is constructed as $\xi = [\Omega \;\; \varv]\T $ and the pose of the rigid  body is,
\begin{equation*}
    \SE \ni g = \begin{bmatrix} R & b \\ 0 & 1 \end{bmatrix}.
\end{equation*}

The continuous time rigid body kinematics are:
\begin{equation*}
    \Dot{R} = R\Omega^{\times}, \quad \Dot{b} = R\varv \Rightarrow \Dot{g} = g \xi^{\vee},
\end{equation*}
where $\xi^{\vee} := \begin{bmatrix} \Omega^\times & \varv \\ 0 & 0\end{bmatrix}$. \\

For the discrete-time pose estimation of the rigid body, the continuous time kinematics are discretized as
\begin{equation}\label{eq:14}
    R_{i+1} = R_i\exp{\left(\frac{h}{2}(\Omega_{i+1}+\Omega_i)^\times\right)}, \quad b_{i+1} = b_i + \frac{h}{2}R_{i+1}\left(\varv_{i} + \varv_{i+1}\right).
\end{equation}
Therefore, the discrete-time kinematics of the rigid body pose can be expressed as
\begin{equation}\label{eq:15}
    g_{i+1} = g_i\begin{bmatrix} \exp{\left(\frac{h}{2}(\Omega_{i+1}+\Omega_i)^\times\right)} & \exp{\left(\frac{h}{2}(\Omega_{i+1}+\Omega_i)^\times\right)}\frac{h}{2}\left(\varv_{i} + \varv_{i+1}\right) \\ 0 & 1 \end{bmatrix}.
\end{equation}

\section{Discrete-time estimation of motion from measurements}\label{sec:4}

Consider the time interval $[t_0,T]$. Let $\hat{\Omega}_i$ and $\hat{\varv}_i$ be the estimated rotational and translational velocity of the rigid body respectively in the frame $S$ at time instant $t_i$. $\hat{R}_i$ is the estimated rotation matrix from $S$ to $O$ and $\hat{b}_i$ is the estimated position of rigid body with respect to frame $O$ expressed in frame $S$ at time instant $t_i$. The generalized estimated velocity of the rigid body is constructed as $\hat{\xi}_i := [\hat{\Omega}_i \;\; \hat{\varv}_i]\T $.
From \eqref{eq:15}, the estimated pose and its kinematics can be computed as
\begin{align}
    &  \SE \ni \hat{g}_i := \begin{bmatrix}\hat{R}_i & \hat{b}_i \\ 0 & 1 \end{bmatrix}, \nonumber \\ & \hat{g}_{i+1} = \hat{g}_i\begin{bmatrix} \exp{\left(\frac{h}{2}(\hat{\Omega}_{i+1}+\hat{\Omega}_i)^\times\right)} & \exp{\left(\frac{h}{2}(\hat{\Omega}_{i+1}+\hat{\Omega}_i)^\times\right)}\frac{h}{2}\left(\hat{\varv}_{i} + \hat{\varv}_{i+1}\right) \\ 0 & 1 \end{bmatrix}.
\end{align}

The pose estimation error $h_i$ at time instant $t_i$ can be computed as
\begin{equation}
    \SE \ni \Bar{g}_i := g_i\hat{g}_i^{-1} = \begin{bmatrix} Q_i & b_i - Q_i\hat{b}_i \\ 0 & 1\end{bmatrix} = \begin{bmatrix} Q_i & x_i\\ 0 & 1\end{bmatrix},
\end{equation}
where $Q_i = R_i\hat{R}_i\T $ is the attitude estimation error and $x_i=b_i - Q_i\hat{b}_i$ is the position estimation error. The estimation error in the generalized velocity is denoted as
\begin{equation}\label{eq:13}
    \varphi_i := \varphi(\xi^m_i,\hat{\xi}_i) = \begin{bmatrix} \omega_i \\ v_i \end{bmatrix} = \xi^m_i - \hat{\xi}_i,
\end{equation}
where $\omega_i = \Omega^m_i - \hat{\Omega}_i$ is the angular velocity estimation error and $v_i = \varv^m_i - \hat{\varv}_i$ is the translational velocity estimation error. Here, $\Omega^m_i$ and $\varv^m_i$ denote the measurements of angular and translational velocities respectively at time instant $t_i$. The discrete-time kinematics of the attitude estimation error and the position estimation error are evaluated as
\begin{align}\label{eq:19}
     Q_{i+1} & = R_{i+1}\hat{R}\T_{i+1} \nonumber \\ & = Q_i\hat{R}_i\exp{\left(\frac{h}{2}(\omega_{i+1} + \omega_i)^\times\right)}\hat{R}_i\T,
\end{align}
and 
\begin{align*}
    x_{i+1} & = b_{i+1} - Q_{i+1}\hat{b}_{i+1} \nonumber\\
    & = b_{i} + R_{i+1}\frac{h}{2}\left(\varv_{i} + \varv_{i+1}\right) - Q_{i+1}\left(\hat{b}_{i} + \hat{R}_{i+1}\frac{h}{2}\left(\hat{\varv}_{i} + \hat{\varv}_{i+1}\right)\right) \nonumber \\
    & = b_i - Q_{i+1}\hat{b}_i + R_{i+1}\frac{h}{2}\left(v_{i} + {v}_{i+1}\right) \nonumber \\
    & = b_i - Q_i\hat{R}_i\exp{\left(\frac{h}{2}(\omega_{i+1} + \omega_i)^\times\right)}\hat{R}_i\T\hat{b}_i + R_{i+1}\frac{h}{2}\left(v_{i} + {v}_{i+1}\right).
\end{align*}

Approximating $\exp{\left(\frac{h}{2}(\omega_{i+1} + \omega_i)^\times\right)}$ by the first two terms in the expansion as
\begin{equation}\label{eq:21}
    \exp{\left(\frac{h}{2}(\omega_{i+1} + \omega_i)^\times\right)} \approx I + \frac{h}{2}(\omega_{i+1} + \omega_i)^\times,
\end{equation}
we have,
\begin{align}\label{eq:17}
    x_{i+1} & = b_i - Q_i\hat{b}_i - \frac{h}{2}Q_i\left(\hat{R}_i(\omega_{i+1} + \omega_i)\right)^\times + R_{i+1}\frac{h}{2}\left(v_{i} + {v}_{i+1}\right) \nonumber \\
    & = x_i - \frac{h}{2}Q_i\left(\hat{R}_i(\omega_{i+1} + \omega_i)\right)^\times + R_{i+1}\frac{h}{2}\left(v_{i} + {v}_{i+1}\right).
\end{align}
It has to be noted that approximation in \eqref{eq:21} is accurate for small values of $h$ and may affect the stability results for very high values of $h$.

\subsection{Discrete-time   optimal pose estimator based on Lagrange-d’Alembert principle}

The error in the attitude estimation is encapsulated by Wahba's cost function\cite{wahba1965least}. Thus, the artificial potential function for rotation estimation error is defined as
\begin{equation}\label{eq:18}
    \mathcal{U}^r_i := \mathcal{U}^r(\hat{g}_i,L^m_i,D_i) = \frac{1}{2}k_p\langle D_i-\hat{R}_iL^m_i, (D_i-\hat{R}_iL^m_i)W_i \rangle,
\end{equation}
where $D$ and $L^m$ are as defined in \eqref{eq:10}-\eqref{eq:11}, $W = \text{diag}(w_j) \in \bR^{n\times n}$ is a positive definite diagonal matrix of the weight factors for the measured directions, and $k_p > 0$ is a scalar gain. The artificial potential function for translation estimation error is defined as:
\begin{equation}\label{eq:24}
    \mathcal{U}^t_i := \mathcal{U}^t(\hat{g}_i,\Bar{a}^m_i,\Bar{p}_i) = \kappa \norm{y_i}^2 := \kappa|| \Bar{p}_i - \hat{R}_i\Bar{a}^m_i - \hat{b}_i ||^2,
\end{equation}
where $\Bar{p}$ and $\Bar{a}^m$ are as per \eqref{eq:9} and $\kappa > 0$ is a scalar gain. The total artificial potential energy will be the sum of the artificial rotational and translational potential functions:
\begin{align}\label{eq:20}
    \mathcal{U}_i := \mathcal{U}(\hat{g}_i,L^m_i,D_i,\Bar{a}^m_i,\Bar{p}_i) & = \mathcal{U}^r(\hat{g}_i,L^m_i,D_i) + \mathcal{U}^t(\hat{g}_i,\Bar{a}^m_i,\Bar{p}_i) \nonumber \\
    & = \frac{1}{2}k_p\langle D_i-\hat{R}_iL^m_i, (D_i-\hat{R}_iL^m_i)W_i \rangle \nonumber \\ & \qquad + \kappa|| \Bar{p}_i - \hat{R}_i\Bar{a}^m_i - \hat{b}_i ||^2.
\end{align}
We define the artificial kinetic energy of the system as a quadratic in the generalized velocity estimation error:
\begin{align}
    \mathcal{T}_i & := \mathcal{T}\left(\varphi(\xi^m_i,\xi_i),\varphi(\xi^m_{i+1},\xi_{i+1})\right) \nonumber\\ & = \frac{m}{2}\left(\varphi(\xi^m_i,\xi_i)+\varphi(\xi^m_{i+1},\xi_{i+1})\right)\T\left(\varphi(\xi^m_i,\xi_i)+\varphi(\xi^m_{i+1},\xi_{i+1})\right),
\end{align}
where $m > 0$ is a scalar, and $\varphi(\xi^m_i,\xi_i)$ and $\varphi(\xi^m_{i+1},\xi_{i+1})$ are according to \eqref{eq:13}.

Note that the artificial kinetic energy $\mathcal{T}_i$ can also be written as the summation of the artificial rotational kinetic energy $\mathcal{T}_i^r$ and artificial translational kinetic energy $\mathcal{T}_i^t$ employing $\eqref{eq:13}$ as
\begin{align}
    \mathcal{T}_i & = \mathcal{T}^r(\omega_{i+1},\omega_i) + \mathcal{T}^t(v_{i+1},v_i) = \mathcal{T}^r_i + \mathcal{T}^t_i \nonumber\\
    & = \frac{1}{2}(\omega_{i+1}+\omega_{i})\T m(\omega_{i+1}+\omega_{i}) + \frac{1}{2}(v_{i+1} + v_i)\T m(v_{i+1} + v_i).
\end{align}

Let the discrete-time Lagrangian be defined as the difference between the artificial kinetic energy and artificial potential energy terms:
\begin{align}\label{eq:26}
    \mathscr{L}_i &:= \mathscr{L}(\omega_{i+1},\omega_i,v_{i+1},v_i,\hat{g}_i,L^m_i,D_i,\Bar{a}^m_i,\Bar{p}_i) \nonumber \\ &= \mathcal{T}^r(\omega_{i+1},\omega_i) + \mathcal{T}^t(v_{i+1},v_i) - \mathcal{U}^r(\hat{g}_i,L^m_i,D_i) - \mathcal{U}^t(\hat{g}_i,\Bar{a}^m_i,\Bar{p}_i) \nonumber \\
    & = \frac{1}{2}(\omega_{i+1}+\omega_{i})\T m(\omega_{i+1}+\omega_{i}) + \frac{1}{2}(v_{i+1} + v_i)\T m(v_{i+1} + v_i) \nonumber \\
    & \qquad -\frac{1}{2}k_p\langle D_i-\hat{R}_iL^m_i, (D_i-\hat{R}_iL^m_i)W_i \rangle - \kappa|| \Bar{p}_i - \hat{R}_i\Bar{a}^m_i - \hat{b}_i ||^2.
\end{align}

If the estimation process is started at time $t_0$, then the discrete-time action functional corresponding to the discrete-time Lagrangian \eqref{eq:26} over the time interval $[t_0, T]$ can be expressed as
\begin{align}\label{eq:27}
    \mathfrak{s}_d(\mathscr{L}_i) & := h\sum_{i=0}^N \bigg \{ \frac{1}{2}(\omega_{i+1}+\omega_{i})\T m(\omega_{i+1}+\omega_{i}) + \frac{1}{2}(v_{i+1} + v_i)\T m(v_{i+1} + v_i) \bigg . \nonumber \\
    & \qquad \bigg. -\frac{1}{2}k_p\langle D_i-\hat{R}_iL^m_i, (D_i-\hat{R}_iL^m_i)W_i \rangle - \kappa|| \Bar{p}_i - \hat{R}_i\Bar{a}^m_i - \hat{b}_i ||^2 \bigg\}.
\end{align}

We are now ready to state our first result on optimal pose estimation.
\begin{proposition}\label{prop}
The variational filter for pose minimizing the action functional $\mathfrak{s}_d(\mathscr{L}_i)$ defined in \eqref{eq:27} is given as
\begin{equation}\label{eq:28}
\begin{cases}
\varphi_{i+2} + \varphi_{i+1} = \exp{\left(-\frac{h}{2}(\hat{\Omega}_{i+2}+\hat{\Omega}_{i+1})^\times\right)}\left[(\varphi_{i+1} + \varphi_i) - \frac{h}{2m}Z^\prime_i - \frac{h}{2m}\eta_{i+1} \right] \\
\hat{\xi}_i = \xi_i^m - \varphi_i, \\
\hat{g}_{i+1} = \hat{g}_i\begin{bmatrix} \exp{\left(\frac{h}{2}(\hat{\Omega}_{i+1}+\hat{\Omega}_i)^\times\right)} & \exp{\left(\frac{h}{2}(\hat{\Omega}_{i+1}+\hat{\Omega}_i)^\times\right)}\frac{h}{2}\left(\hat{\varv}_{i} + \hat{\varv}_{i+1}\right) \\ 0 & 1 \end{bmatrix},
\end{cases}
\end{equation}
where $\eta_{i+1}$ contains Rayleigh dissipation terms for angular and translational motions defined as
\begin{equation*}
    \eta_{i+1} := \begin{bmatrix} \tau_{i+1} \\ f_{i+1} \end{bmatrix},
\end{equation*}
with $Z_i^\prime := Z^\prime(\hat{g}_{i+1},\hat{g}_i,L^m_{i+1},D_{i+1},\Bar{a}^m_{i+1},\tau_i,f_i)$ defined by
\begin{equation*}
    Z^\prime_i := \begin{bmatrix}
    -k_pS_{\Gamma_{i+1}}(\hat{R}_{i+1}) + m(\hat{\varv}_{i+1} + \varv_i)^\times(v_{i+1} + v_i) + \kappa(\Bar{a}^m_{i+1})^\times\hat{R}_{i+1}\T y_{i+1} \\
    \kappa\hat{R}_{i+1}\T y_{i+1},
    \end{bmatrix}
\end{equation*}
where $S_{\Gamma_i}(\hat{R}_i) := \text{vex}(\Gamma_i\T\hat{R}_i - \hat{R}_i\T\Gamma_i)$ and $\Gamma_i := D_iW_i(L^m_i)\T$.

\end{proposition}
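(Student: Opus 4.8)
The plan is to apply the discrete Lagrange--d'Alembert principle to the discrete action functional $\mathfrak{s}_d(\mathscr{L}_i)$ of \eqref{eq:27}: for every variation of the estimated trajectory vanishing at the endpoints $i=0$ and $i=N+1$, the first variation of the action must balance the virtual work done by the Rayleigh dissipation terms $\eta_{i+1}$. The independent variables are the generalized velocity estimation errors $\varphi_i$ (equivalently $\hat{\xi}_i=\xi^m_i-\varphi_i$, since the sensor signals are held fixed under variation), while the estimated pose $\hat{g}_i$ is \emph{reconstructed} from $\hat{g}_0$ and $\{\hat{\xi}_j\}_{j<i}$ through the discrete kinematics \eqref{eq:16}. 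Accordingly I would write the induced pose variation in left-trivialized form, $\delta\hat{g}_i=\hat{g}_i\Sigma_i$ with $\Sigma_i$ in the Lie algebra of $\SE$, i.e. $\delta\hat{R}_i=\hat{R}_i\sigma_i^\times$ and $\delta\hat{b}_i=\hat{R}_i\rho_i$ for variation vectors $\sigma_i,\rho_i\in\bR^3$ that are free in the interior and zero at the endpoints.

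First I would vary the artificial potential energy. For the rotational term \eqref{eq:18}, substituting $\delta\hat{R}_i=\hat{R}_i\sigma_i^\times$ and using cyclicity of the trace, the symmetry of $W_i$ and of $L^m_iW_i(L^m_i)\T$ (which then drops out under the skew pairing), together with the identity $\mathrm{trace}(\sigma^\times M)=-\sigma\T\mathrm{vex}(M-M\T)$, yields $\delta\mathcal{U}^r_i=k_p\,\sigma_i\T S_{\Gamma_i}(\hat{R}_i)$ with $\Gamma_i=D_iW_i(L^m_i)\T$ --- exactly the $S_\Gamma$-term in $Z'$. For the translational term \eqref{eq:24}, $\delta\mathcal{U}^t_i=2\kappa\,y_i\T\bigl(-\hat{R}_i\sigma_i^\times\bar{a}^m_i-\hat{R}_i\rho_i\bigr)$, and rewriting $-\sigma_i^\times\bar{a}^m_i=(\bar{a}^m_i)^\times\sigma_i$ produces the $(\bar{a}^m_i)^\times\hat{R}_i\T y_i$ contribution along $\sigma_i$ and the $\hat{R}_i\T y_i$ contribution along $\rho_i$, matching the remaining entries of $Z'$. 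The variation of the kinetic terms is immediate: $\delta\bigl(\tfrac{m}{2}\|\varphi_{i+1}+\varphi_i\|^2\bigr)=m(\varphi_{i+1}+\varphi_i)\T(\delta\varphi_{i+1}+\delta\varphi_i)$, and likewise for the translational block.

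Next I would link the two kinds of variation. Differentiating the reconstruction $\hat{g}_{i+1}=\hat{g}_iF_i$, where $F_i\in\SE$ is the bracketed matrix in \eqref{eq:16} and depends on both $\hat{\xi}_i$ and $\hat{\xi}_{i+1}$, gives the discrete variational recursion $\Sigma_{i+1}=Ad_{F_i^{-1}}\Sigma_i+F_i^{-1}\delta F_i$ with $\Sigma_0=0$, in which $F_i^{-1}\delta F_i$ is linear in $\delta\hat{\xi}_i=-\delta\varphi_i$ and $\delta\hat{\xi}_{i+1}=-\delta\varphi_{i+1}$ through the (right-trivialized) differential of the matrix exponential; the first-order truncation \eqref{eq:21} is what keeps this differential in closed form. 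Substituting the recursion into $\delta\mathfrak{s}_d$, performing a discrete summation by parts to transfer all indices onto a single free variation $\delta\varphi_i$, appending the dissipation pairing, and equating the coefficient of each $\delta\varphi_i$ to zero produces a second-order difference equation for the discrete momentum $m(\varphi_{i+1}+\varphi_i)$. In it, the adjoint transport factor --- whose rotational block is $\exp\!\left(-\tfrac{h}{2}(\hat{\Omega}_{i+2}+\hat{\Omega}_{i+1})^\times\right)$, the inverse of the rotational factor in \eqref{eq:16} --- moves the momentum between consecutive tangent spaces, the potential variations assemble into $Z'_i$, the commutator (Euler--Poincar\'e) terms coming from the adjoint action contribute the $m(\hat{\varv}_{i+1}+\varv_i)^\times(v_{i+1}+v_i)$ entry of $Z'_i$, and $\eta_{i+1}$ enters as the dissipation. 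Solving for $\varphi_{i+2}+\varphi_{i+1}$ gives the first line of \eqref{eq:28}; the second line is the defining relation \eqref{eq:13} and the third is the reconstruction \eqref{eq:16}, so nothing further is needed there.

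The main obstacle, as is typical for variational integrators on Lie groups, is the bookkeeping around the group exponential: deriving the recursion for $\Sigma_i$ correctly --- it is the discrete counterpart of the reduced-variation identity $\delta\xi=\dot{\Sigma}+[\xi,\Sigma]$ --- and keeping the adjoint conjugations and the truncation \eqref{eq:21} mutually consistent on the rotational and translational blocks, so that the off-diagonal coupling of $Ad_{\SE}$ reappears cleanly inside $Z'_i$ and the summation by parts places the leading factor $\exp\!\left(-\tfrac{h}{2}(\hat{\Omega}_{i+2}+\hat{\Omega}_{i+1})^\times\right)$ and the remaining $\hat{R}$-factors exactly as in \eqref{eq:28}. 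By comparison, the potential-energy variations are routine Wahba-cost manipulations.
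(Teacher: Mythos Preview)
Your plan is correct and contains every ingredient the paper uses: the left-trivialized pose variations $\delta\hat R_i=\hat R_i\sigma_i^\times$, $\delta\hat b_i=\hat R_i\rho_i$; the Wahba gradient $k_pS_{\Gamma_i}(\hat R_i)$ and the translational gradients $\kappa(\bar a^m_i)^\times\hat R_i^\T y_i$, $\kappa\hat R_i^\T y_i$; the adjoint-transport factor $\exp\!\bigl(-\tfrac{h}{2}(\hat\Omega_{i+2}+\hat\Omega_{i+1})^\times\bigr)$; and the $\SE$-coupling term that furnishes the $m(\hat\varv_{i+1}+\hat\varv_i)^\times(v_{i+1}+v_i)$ entry of $Z'_i$. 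The one organizational difference is the direction of the reduction. The paper takes the \emph{pose} variations $(\Sigma_i,\rho_i)$ --- your $(\sigma_i,\rho_i)$ --- as the free objects with fixed endpoints, reads off $\delta(\hat\Omega_{i+1}+\hat\Omega_i)$ and $\delta(\hat\varv_{i+1}+\hat\varv_i)$ from the discrete kinematics (its \eqref{eq:35} and \eqref{eq:38}), substitutes these into $\delta\mathcal T^r_i,\delta\mathcal T^t_i$, and then a single index shift collects everything on $\Sigma_{i+1},\rho_{i+1}$; the dissipation is paired directly with the pose variations, $h\sum_i(\tau_i^\T\Sigma_i+f_i^\T\rho_i)$. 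Your route --- treat $\delta\varphi_i$ as free and propagate $\Sigma_i$ forward via $\Sigma_{i+1}=Ad_{F_i^{-1}}\Sigma_i+F_i^{-1}\delta F_i$ --- is the dual calculation and yields the same filter, but the recursion makes $\Sigma_i$ (and hence the potential and dissipation pairings) nonlocal in the $\delta\varphi_j$, so the summation by parts is heavier and the fixed-endpoint condition at $i=N$ becomes a linear constraint on the $\delta\varphi_j$ rather than a simple boundary value. The paper's direction is the standard discrete Euler--Poincar\'e route and keeps every term local in the index; nothing is gained by inverting it here.
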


\begin{proof}
Consider a first variation in the discrete attitude estimate as
\begin{equation}\label{eq:29}
    \delta \hat{R}_i = \hat{R}_i\Sigma_i^{\times},
\end{equation}
where $\Sigma_i \in \bR^3$ represents a variation   for the discrete attitude estimate. For fixed end-point variations, we have $\Sigma_0 = \Sigma_N = 0$. A first order approximation is to assume that $\hat{\Omega}^\times$ and $\delta\hat{\Omega}^\times$ commute. Taking the first variation of the discrete-time attitude kinematics according to the first equation of \eqref{eq:14} and comparing with \eqref{eq:29} we get
\begin{align}\label{eq:30}
    \delta\hat{R}_{i+1} & = \delta\hat{R}_i\exp{\left(\frac{h}{2}(\hat{\Omega}_{i+1} + \hat{\Omega}_i)^\times\right)} \nonumber \\ & + \frac{h}{2}\hat{R}_i\exp{\left( \frac{h}{2}(\hat{\Omega}_{i+1}+\hat{\Omega}_i)^\times \right)}\delta(\hat{\Omega}_{i+1}+\hat{\Omega}_i)^\times \nonumber \\
    & = \hat{R}_{i+1}\Sigma_{i+1}^\times.
\end{align}
The above can be rearranged to
\begin{align}\label{eq:31}
    \hat{R}_{i+1}\frac{h}{2}\delta(\hat{\Omega}_{i+1}+\hat{\Omega}_i)^\times & = \hat{R}_{i+1}\Sigma_{i+1}^\times \nonumber \\ & - \hat{R}_{i+1}\text{Ad}_{\exp{\left(-\frac{h}{2}(\hat{\Omega}_{i+1}+\hat{\Omega}_i)^\times\right)}}\Sigma_i^\times \nonumber \\
    \Rightarrow \frac{h}{2}\delta(\hat{\Omega}_{i+1}+\hat{\Omega}_i)^\times & = \Sigma_{i+1}^\times - \text{Ad}_{\exp{\left(-\frac{h}{2}(\hat{\Omega}_{i+1}+\hat{\Omega}_i)^\times\right)}}\Sigma_i^\times,
\end{align}
which can be equivalently written as an equation in $\bR^3$ as follows:
\begin{equation}\label{eq:35}
    \frac{h}{2}\delta(\hat{\Omega}_{i+1}+\hat{\Omega}_i) = \Sigma_{i+1} - \exp{\left(-\frac{h}{2}(\hat{\Omega}_{i+1}+\hat{\Omega}_i)^\times\right)}\Sigma_i.
\end{equation}
Taking the variation of $\omega_i = \Omega_i^m - \hat{\Omega}_i$
\begin{equation}\label{eq:36}
    \delta(\omega_{i+1}+\omega_i) = - \delta(\hat{\Omega}_{i+1}+\hat{\Omega}_i).
\end{equation}
Next, we assume the variation in $\hat{b}_i$ to be,
\begin{equation}\label{eq:37}
    \delta\hat{b}_i = \hat{R}_i\rho_i,
\end{equation}
where $\rho_i \in \bR^3$ represents the variation in the discrete position estimate. For fixed end-point variations, we have $\rho_0 = \rho_N = 0$. Taking the first variation of the discrete-time position kinematics according to the second equation of \eqref{eq:14} and comparing with \eqref{eq:37} we get
\begin{align}\label{eq:38}
    &\delta\hat{b}_{i+1} = \delta\hat{b}_i + \delta\hat{R}_{i+1}\frac{h}{2}(\hat{\varv}_{i+1} + \hat{\varv}_i) + \hat{R}_{i+1}\frac{h}{2}\delta(\hat{\varv}_{i+1} + \hat{\varv}_i) \nonumber\\
    &\Rightarrow \hat{R}_{i+1}\rho_{i+1} = \hat{R}_i\rho_i + \hat{R}_{i+1}\Sigma_{i+1}^\times\frac{h}{2}(\hat{\varv}_{i+1} + \hat{\varv}_i) + \hat{R}_{i+1}\frac{h}{2}\delta(\hat{\varv}_{i+1} + \hat{\varv}_i) \nonumber \\
    &\Rightarrow \frac{h}{2}\delta(\hat{\varv}_{i+1} + \hat{\varv}_i) = \rho_{i+1} - \exp{\left(-\frac{h}{2}(\hat{\Omega}_{i+1}+\hat{\Omega}_i)^\times\right)}\rho_i - \Sigma_{i+1}^\times\frac{h}{2}(\hat{\varv}_{i+1} + \hat{\varv}_i)
\end{align}
and the variation of $v_i = \varv_i^m - \hat{\varv}_i$ gives us
\begin{equation}\label{eq:39}
    \delta(v_{i+1}+v_i) = - \delta(\hat{\varv}_{i+1}+\hat{\varv}_i).
\end{equation}

We have $y_i = \Bar{p}_i - \hat{R}_i\Bar{a}_i^m - \hat{b}_i$. Therefore,
\begin{align}\label{eq:40}
    \delta y_i & = -\delta\hat{R}_i\Bar{a}_i^m -\delta\hat{b}_i \nonumber \\
    & = \hat{R}_i\Sigma_i^\times\Bar{a}_i^m - \hat{R}_i\rho_i \nonumber \\
    & = \hat{R}_i\left(\left(\Bar{a}_i^m\right)^\times\Sigma_i - \rho_i \right).
\end{align}

Consider the artificial potential energy term in \eqref{eq:18}. Taking its first variation with respect to the estimated attitude $\hat{R}$, we get
\begin{align}\label{eq:41}
    \delta\mathcal{U}^r_i & = \frac{k_p}{2}\left\{ \langle -\delta\hat{R}_iL_i^m , (D_i-\hat{R}_iL_i^m)W_i \rangle \right. \left. + \langle D_i-\hat{R}_iL_i^m , (-\delta\hat{R}_iL_i^m)W_i \rangle \right\} \nonumber \\
    & = k_p\langle -\delta\hat{R}_iL_i^m , (D_i-\hat{R}_iL_i^m)W_i \rangle \nonumber \\
    & = k_p\langle -\hat{R}_i\Sigma_i^\times, (D_i-\hat{R}_iL_i^m)W_i \rangle \nonumber \\
    & = k_p\text{tr}\left( (L_i^m)\T\Sigma_i^\times\hat{R}_i\T(D_i-\hat{R}_iL_i^m)W_i \right) \nonumber \\ & = k_p\text{tr}\left((\Sigma_i^\times)\T L_i^mW_iD_i\T\hat{R}_i\right) \nonumber \\
    &= k_p\langle \Sigma_i^\times , L_i^mW_iD_i\T\hat{R}_i\rangle \nonumber \\
    &= k_p\frac{1}{2} \langle\, \Sigma^\times , L_i^mW_iD_i\T\hat{R}_i - \hat{R}_i\T D_iW_i(L_i^m)\T\rangle \nonumber \\
    &= k_p\frac{1}{2}\langle\, \Sigma_i^\times , \Gamma_i\T\hat{R}_i - \hat{R}_i\T \Gamma_i \rangle = k_pS_{\Gamma_i}\T(\hat{R}_i)\Sigma_i.
\end{align}
Similarly, taking the first variation of the artificial potential energy function in \eqref{eq:24} and using results from \eqref{eq:40}, we get
\begin{equation}\label{eq:42}
    \delta\mathcal{U}_i^t = \kappa y_i\T\delta y_i = \kappa y_i\T\left(\left(\Bar{a}_i^m\right)^\times\Sigma_i - \rho_i \right).
\end{equation}
Similarly, we also obtain the variations in artificial kinetic energies as follows
\begin{equation}\label{eq:43}
    \delta\mathcal{T}^r_i = \frac{2m}{h}(\omega_{i+1} + \omega_i)\T\left(\exp{\left(-\frac{h}{2}(\hat{\Omega}_{i+1}+\hat{\Omega}_i)^\times\right)}\Sigma_i - \Sigma_{i+1} \right),
\end{equation}
\begin{equation}\label{eq:44}
    \delta\mathcal{T}^t_i = \frac{2m}{h}(v_{i+1} + v_i)\T\left(\exp{\left(-\frac{h}{2}(\hat{\Omega}_{i+1}+\hat{\Omega}_i)^\times\right)}\rho_i + \Sigma_{i+1}^\times\frac{h}{2}(\hat{\varv}_{i+1} + \hat{\varv}_i) - \rho_{i+1} \right),
\end{equation}
with the help of relations from \eqref{eq:35}-\eqref{eq:39}. Taking the first variation of the discrete-time action sum in \eqref{eq:27} and employing \eqref{eq:41}-\eqref{eq:44} we obtain
\begin{align}
    & \delta\mathfrak{s}_d = h\sum_{i=0}^N\left\{ \delta\mathcal{T}_i^r + \delta\mathcal{T}_i^t - \delta\mathcal{U}_i^r - \delta\mathcal{U}_i^t \right\} \nonumber \\
    & = h\sum_{i=0}^N \left\{ \frac{2m}{h}(\omega_{i+1} + \omega_i)\T\left(\exp{\left(-\frac{h}{2}(\hat{\Omega}_{i+1}+\hat{\Omega}_i)^\times\right)}\Sigma_i - \Sigma_{i+1} \right) \right. \nonumber \\
    & + \frac{2m}{h}(v_{i+1} + v_i)\T\left(\exp{\left(-\frac{h}{2}(\hat{\Omega}_{i+1}+\hat{\Omega}_i)^\times\right)}\rho_i + \Sigma_{i+1}^\times\frac{h}{2}(\hat{\varv}_{i+1} + \hat{\varv}_i) - \rho_{i+1} \right) \nonumber \\
    & \qquad \bigg. -k_pS_{\Gamma_i}\T(\hat{R}_i)\Sigma_i -\kappa y_i\T\left(\left(\Bar{a}_i^m\right)^\times\Sigma_i - \rho_i \right) \bigg \}.
\end{align}

We now apply the discrete Lagrange-d'Alembert principle \cite{marsden2001discrete} with two Rayleigh dissipation terms $\tau_i \in \bR^3$ and $f_i \in \bR^3$ for angular and translational motion respectively,
\begin{align}\label{eq:45}
    & \delta\mathfrak{s}_d + h\sum_{i=0}^{N-1}\left\{ \tau_i\T\Sigma_i + f_i\T\rho_i \right\} = 0 \nonumber \\ & \Rightarrow \sum_{i=0}^{N-1} \left\{ 2m(\omega_{i+1} + \omega_i)\T\left(\exp{\left(-\frac{h}{2}(\hat{\Omega}_{i+1}+\hat{\Omega}_i)^\times\right)}\Sigma_i - \Sigma_{i+1} \right) \right. \nonumber \\
    & + 2m(v_{i+1} + v_i)\T\left(\exp{\left(-\frac{h}{2}(\hat{\Omega}_{i+1}+\hat{\Omega}_i)^\times\right)}\rho_i + \Sigma_{i+1}^\times\frac{h}{2}(\hat{\varv}_{i+1} + \hat{\varv}_i) - \rho_{i+1} \right) \nonumber \\
    & \qquad \bigg. -k_phS_{\Gamma_i}\T(\hat{R}_i)\Sigma_i -\kappa h y_i\T\left(\left(\Bar{a}_i^m\right)^\times\Sigma_i - \rho_i \right) h\tau_i\T\Sigma_i + h f_i\T\rho_i \bigg \} = 0.
\end{align}
 
Splitting \eqref{eq:45} into two equations assuming independence of $\Sigma_i$ and $\rho_i$ will give us
\begin{align}\label{eq:46}
& 2m(\omega_{i+2} + \omega_{i+1})\T\exp{\left(-\frac{h}{2}(\hat{\Omega}_{i+2}+\hat{\Omega}_{i+1})^\times\right)} - 2m(\omega_{i+1} + \omega_{i})\T \nonumber \\
& \qquad - hm(v_{i+1} + v_i)\T(\hat{\varv}_{i+1} + \hat{\varv}_i)^\times - -k_phS_{\Gamma_{i+1}}\T(\hat{R}_{i+1}) \nonumber \\
& \qquad -\kappa h y_{i+1}\T\left(\Bar{a}_{i+1}^m\right)^\times + h\tau_{i+1}\T = 0,
\end{align}
and
\begin{align}\label{eq:47}
& 2m(v_{i+2} + v_{i+1})\T\exp{\left(-\frac{h}{2}(\hat{\Omega}_{i+2}+\hat{\Omega}_{i+1})^\times\right)} - 2m(v_{i+1} + v_i)\T \nonumber \\
& \qquad +\kappa h y_{i+1}\T\hat{R}_{i+1} + hf_i\T = 0.
\end{align}
The above equations can be simplified to obtain
\begin{align}\label{eq:48}
\omega_{i+2} + \omega_{i+1} & = \exp{\left(-\frac{h}{2}(\hat{\Omega}_{i+2}+\hat{\Omega}_{i+1})^\times\right)} \bigg[ \omega_{i+1} + \omega_{i} + \frac{h}{2m}\big\{ k_pS_{\Gamma_{i+1}}(\hat{R}_{i+1}) \big. \bigg. \nonumber \\
& \bigg. \big. -m(\hat{\varv}_{i+1} + \varv_i)^\times(v_{i+1} + v_i) - \kappa(\Bar{a}^m_{i+1})^\times\hat{R}_{i+1}\T y_{i+1} - \tau_{i+1} \big\}  \bigg] = 0,
\end{align}
and
\begin{align}\label{eq:49}
v_{i+2} + v_{i+1} & = \exp{\left(-\frac{h}{2}(\hat{\Omega}_{i+2}+\hat{\Omega}_{i+1})^\times\right)} \bigg[ v_{i+1} + v_{i} \bigg. \nonumber \\
& \qquad \bigg. -\frac{h}{2m}\left\{ \kappa\hat{R}_{i+1}\T y_{i+1} + f_{i+1} \right\} \bigg] = 0,
\end{align}
which in turn can be combined to obtain \eqref{eq:28}. \qed
\end{proof}

\section{Discrete-time asymptotically stable and optimal pose estimator}\label{sec:5}

For the discrete-time Lyapunov analysis, we use the same combination of artificial potential energy like terms as defined in \eqref{eq:20}. We construct a new kinetic energy like term to encapsulate the error in the generalized velocity estimation. This term will aid us in the Lyapunov analysis. We propose the following kinetic energy like term:
\begin{equation}
    \mathcal{T}^l_i := \mathcal{T}^l\left(\varphi(\xi^m_i,\xi_i)\right) = \frac{m}{2}\varphi(\xi^m_i,\xi_i)\T\varphi(\xi^m_i,\xi_i),
\end{equation}
where $m > 0$ is a scalar. We carry out the Lyapunov analysis in the absence of measurement errors. The following Lemma provides the form of the artificial potential energy term in the absence of measurement errors.
\begin{lemma}
In the absence of measurement noise, the artificial potential energy defined in \eqref{eq:20} takes the form
\begin{equation}
    \mathcal{U}_i = \mathcal{U}(\hat{g}_i,L^m_i,D_i,\Bar{a}^m_i,\Bar{p}_i) = k_p\langle I-Q_i,K_i \rangle +\kappa y_i\T y_i,
\end{equation}
where $K_i = D_iW_iD_i\T$ and $y_i = y(h_i,\Bar{p}_i) = Q_i\T x_i + (I-Q_i\T)\Bar{p}_i$.
\end{lemma}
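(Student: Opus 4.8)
The plan is to substitute the noise-free measurement models from \eqref{eq:9} and \eqref{eq:11} into the two summands of \eqref{eq:20} and simplify each using the definitions of the error variables $Q_i = R_i\hat{R}_i\T$ and $x_i = b_i - Q_i\hat{b}_i$. Setting $\mathscr{L} = 0$ in \eqref{eq:11} gives $L^m_i = R_i\T D_i$, and setting $\zeta = 0$ in \eqref{eq:9} gives $\Bar{a}^m_i = R_i\T(\Bar{p}_i - b_i)$. These two identities are all that the ``absence of measurement noise'' hypothesis contributes.

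For the rotational term, I would first rewrite $D_i - \hat{R}_i L^m_i = D_i - \hat{R}_i R_i\T D_i = (I - Q_i\T)D_i$, using $\hat{R}_i R_i\T = Q_i\T$. Plugging this into $\tfrac12 k_p\langle D_i - \hat{R}_i L^m_i, (D_i - \hat{R}_i L^m_i)W_i\rangle$ and expanding the trace inner product yields $\tfrac12 k_p\,\mathrm{tr}\!\left(D_i\T (I - Q_i)(I - Q_i\T) D_i W_i\right)$. Then $(I - Q_i)(I - Q_i\T) = 2I - Q_i - Q_i\T$ (using $Q_i Q_i\T = I$), and by cyclicity of the trace this equals $\tfrac12 k_p\,\mathrm{tr}\!\left((2I - Q_i - Q_i\T) K_i\right)$ with $K_i = D_i W_i D_i\T$. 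Since $W_i$ is diagonal, $K_i$ is symmetric, so $\mathrm{tr}(Q_i\T K_i) = \mathrm{tr}(Q_i K_i)$; hence the expression collapses to $k_p\bigl(\mathrm{tr}(K_i) - \mathrm{tr}(Q_i K_i)\bigr) = k_p\langle I - Q_i, K_i\rangle$.

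For the translational term, I would substitute $\Bar{a}^m_i = R_i\T(\Bar{p}_i - b_i)$ into $\Bar{p}_i - \hat{R}_i\Bar{a}^m_i - \hat{b}_i$ to get $\Bar{p}_i - Q_i\T(\Bar{p}_i - b_i) - \hat{b}_i$, again using $\hat{R}_i R_i\T = Q_i\T$. Then observe that $Q_i\T b_i - \hat{b}_i = Q_i\T(b_i - Q_i\hat{b}_i) = Q_i\T x_i$, so this rearranges to $(I - Q_i\T)\Bar{p}_i + Q_i\T x_i = y_i$, giving $\mathcal{U}^t_i = \kappa\|y_i\|^2 = \kappa y_i\T y_i$. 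Adding the two pieces gives the claimed form. There is no real obstacle here; the only points requiring mild care are keeping track of which factor carries the transpose in $I - Q_i$ versus $I - Q_i\T$, and invoking the symmetry of $K_i$ together with the cyclic property of the trace to reduce $\mathrm{tr}((2I - Q_i - Q_i\T)K_i)$ to $2\langle I - Q_i, K_i\rangle$.
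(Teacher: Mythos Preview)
Your proposal is correct and follows essentially the same route as the paper: both substitute the noise-free relations $L_i^m=R_i\T D_i$ and $\bar a_i^m=R_i\T(\bar p_i-b_i)$, rewrite $D_i-\hat R_iL_i^m=(I-Q_i\T)D_i$, and use cyclicity of the trace together with symmetry of $K_i$ to collapse the rotational term, while the translational term is reduced via $Q_i\T b_i-\hat b_i=Q_i\T x_i$. Your write-up is in fact more explicit than the paper's about the step $(I-Q_i)(I-Q_i\T)=2I-Q_i-Q_i\T$ and the halving of the factor of~2, which the paper glosses over.
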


\begin{proof}
 In the absence of measurements errors, we have $L^m_i =L_i$, $\Bar{a}^m_i = \Bar{a}_i$ and $\xi^m_i = \xi_i$. The rotational potential function can be rewritten as
 \begin{align*}
     \mathcal{U}^r(\hat{g}_i,L^m_i,D_i) & = \frac{k_p}{2} \langle D_i-\hat{R}_iL^m_i, (D_i-\hat{R}_iL^m_i)W_i \rangle \nonumber \\
     & = \frac{k_p}{2}\langle\,I - R_i\hat{R}_i\T,D_iW_iD_i\T\rangle \nonumber \\
     \Rightarrow \mathcal{U}^r(Q_i) & =  k_p\langle\,I-Q_i,K_i\rangle \;\; \text{where} \;\; K_i = D_iW_iD_i\T.
\end{align*}
 Since we have $\hat{R}_iL_i = Q_i\T D_i$ and $\hat{b}_i = Q_i\T(b_i-x_i)$ we get
 \begin{equation*}
     y_i = y(h_i,\Bar{p}_i) = \Bar{p}_i - \hat{R}_i\Bar{a}_i - \hat{b}_i = Q_i\T x_i + (I-Q_i\T)\Bar{p}_i,
 \end{equation*}
 as the form of potential energy in the absence of measurement errors. \qed
\end{proof}
We state the following assumption that is relevant for the proof of asymptotic stability.
\begin{assumption}\label{assumption}
The measured beacons and inertial vectors are fixed throughout the estimation process which results in $K_i = K$ and $\Bar{p}_i = \Bar{p}$ for some constants $K \in \bR^{n\times n}$ and $\Bar{p} \in \bR^3$ for all $t_i$.
\end{assumption}
We are now ready to state the main result of this article on an optimal asymptotically stable pose filter.
\begin{theorem}
Consider the following form of the dissipation term in \eqref{eq:52}
\begin{align}\label{eq:52}
    \eta_{i+1} & = \frac{2}{h} \Bigg\{ m(\varphi_{i+1} + \varphi_i) - \frac{h}{2}Z^\prime_i \Bigg. \nonumber \\ & \qquad \Bigg. -\frac{m}{m+l}\exp{\left(\frac{h}{2}(\hat{\Omega}_{i+2}+\hat{\Omega}_{i+1})^\times\right)}\left[2m\varphi_{i+1} - hZ_{i+1}\right] \Bigg\}.
\end{align}
Then the resulting nonlinear pose estimator given by,
\begin{equation}\label{eq:53}
\begin{cases}
\varphi_{i+1} = \frac{1}{m+l}\left[ (m-l)\varphi_i - hZ_i \right] \\
\hat{\xi}_i = \xi^m_i - \varphi_i, \\
\hat{g}_{i+1} = \hat{g}_i\begin{bmatrix} \exp{\left(\frac{h}{2}(\hat{\Omega}_{i+1}+\hat{\Omega}_i)^\times\right)} & \exp{\left(\frac{h}{2}(\hat{\Omega}_{i+1}+\hat{\Omega}_i)^\times\right)}\frac{h}{2}\left(\hat{\varv}_{i} + \hat{\varv}_{i+1}\right) \\ 0 & 1 \end{bmatrix},
\end{cases}
\end{equation}
where and $Z_i = Z(\hat{g}_i,L^m_i,D_i,\Bar{a}^m_i,\Bar{p}_i)$ is defined by
\begin{equation}\label{eq:54}
    Z(\hat{g}_i,L^m_i,D_i,\Bar{a}^m_i,\Bar{p}_i) = \begin{bmatrix}
    -k_pS_{\Gamma_i}(\hat{R}_i) + \kappa\hat{R}_i\T\left(Q_i\T(\Bar{p} - b_i)\right)^\times(y_{i+1} + y_i) \\
    \kappa\hat{R}_{i+1}\T(y_{i+1} + y_i) 
    \end{bmatrix}
\end{equation}
where $\Gamma_i = D_iW_i(L^m_i)\T$, $S_{\Gamma_i}(\hat{R}_i) = \text{vex}(\Gamma_i\T\hat{R}_i - \hat{R}_i\T\Gamma_i)$
and $l>0,l\neq m$, is asymptotically stable under Assumption \ref{assumption} at the estimation error state $(\Bar{g}_i, \varphi_i) =
(I, 0)$. Further, the domain of attraction of $(\Bar{g}_i, \varphi_i) = (I, 0)$ is a dense
open subset of $\SE\times \bR^6$.
\end{theorem}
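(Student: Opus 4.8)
The plan is in three stages. \emph{Stage (i) --- reduction.} First I would substitute the dissipation $\eta_{i+1}$ of \eqref{eq:52} into the first line of the variational filter \eqref{eq:28}. The term $-\frac{h}{2m}\eta_{i+1}$ expands to $-(\varphi_{i+1}+\varphi_i)+\frac{h}{2m}Z'_i+\frac{1}{m+l}\exp\!\big(\tfrac h2(\hat\Omega_{i+2}+\hat\Omega_{i+1})^\times\big)[\,2m\varphi_{i+1}-hZ_{i+1}\,]$, so the bracket in \eqref{eq:28} collapses to $\frac{1}{m+l}\exp\!\big(\tfrac h2(\hat\Omega_{i+2}+\hat\Omega_{i+1})^\times\big)[\,2m\varphi_{i+1}-hZ_{i+1}\,]$; the outer $\exp\!\big(-\tfrac h2(\cdot)^\times\big)$ then cancels and a re-indexing gives $(m+l)\varphi_{i+1}=(m-l)\varphi_i-hZ_i$, which is \eqref{eq:53}. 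This step is purely algebraic. From here on I work with \eqref{eq:53} in the noise-free case, using the forms supplied by the preceding Lemma together with the identities $\Gamma_i=KQ_i\hat R_i$ and $S_{\Gamma_i}(\hat R_i)=\hat R_i\T\,\text{vex}(Q_i\T K-KQ_i)$ (proved exactly as in that Lemma) and the error kinematics of $(Q_i,x_i,y_i)$ induced by \eqref{eq:19} and \eqref{eq:17}.

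\emph{Stage (ii) --- Lyapunov function and monotonicity.} I would take the Morse--Lyapunov candidate $V_i:=\mathcal U_i+\mathcal T^l_i=k_p\langle I-Q_i,K\rangle+\kappa\|y_i\|^2+\tfrac m2\|\varphi_i\|^2$. It is positive definite about $(\bar g_i,\varphi_i)=(I,0)$: $K$ positive definite gives $\langle I-Q,K\rangle\ge0$ with equality iff $Q=I$, and $Q=I$ together with $y=0$ forces $x=0$, while $\tfrac m2\|\varphi\|^2=0$ iff $\varphi=0$; moreover its sublevel sets are compact, since $\SO$ is compact and a bound on $\kappa\|y_i\|^2+\tfrac m2\|\varphi_i\|^2$ bounds $x_i$ and $\varphi_i$. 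The recursion \eqref{eq:53} is, precisely, a discrete ``heavy-ball'' step $\varphi_{i+1}=\beta\varphi_i-\alpha Z_i$ with $\beta=\frac{m-l}{m+l}\in(-1,1)$ (because $l>0$) and $\alpha=\frac{h}{m+l}>0$, with $Z_i$ a time-averaged gradient of $\mathcal U_i$ in the error variables --- which is why $Z_i$ has the form \eqref{eq:54}. One computes exactly $\Delta\mathcal T^l_i=-\frac{2m^2l}{(m+l)^2}\|\varphi_i\|^2-\frac{mh(m-l)}{(m+l)^2}\varphi_i\T Z_i+\frac{mh^2}{2(m+l)^2}\|Z_i\|^2$, whose leading term is strictly negative. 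Expanding $\Delta\mathcal U_i$ via $Q_{i+1}=Q_i\exp\!\big(\tfrac h2(\hat R_i(\omega_{i+1}+\omega_i))^\times\big)$ and the $x$-update \eqref{eq:17}, its first-order-in-$h$ part is $-\frac h2(\varphi_{i+1}+\varphi_i)\T Z_i$ (exactly the content of the variational identities \eqref{eq:41}--\eqref{eq:44}, which dictated the form of $Z_i$); substituting $\varphi_{i+1}+\varphi_i=\frac{1}{m+l}(2m\varphi_i-hZ_i)$ the $\varphi_i\T Z_i$ cross-terms in $\Delta\mathcal U_i+\Delta\mathcal T^l_i$ combine, and for $h$ below an explicit threshold (as anticipated by the remark after \eqref{eq:21}) the sum is bounded above by $-c\|\varphi_i\|^2$ for some $c>0$ up to dominated $O(h^2)$ curvature remainders, so $\Delta V_i\le0$ globally with $\Delta V_i=0$ forcing $\varphi_i=0$.

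\emph{Stage (iii) --- limit set and domain of attraction.} Compactness of sublevel sets and $\Delta V_i\le0$ make every trajectory bounded, so the discrete LaSalle invariance principle applies: every trajectory converges to the largest invariant subset of $\{\Delta V_i=0\}$. On that set $\varphi_i\equiv0$, so \eqref{eq:53} forces $Z_i\equiv0$; the second block then gives $y_{i+1}+y_i=0$, while $\varphi\equiv0$ in the $y$-kinematics gives $y_{i+1}=y_i$, hence $y_i=0$, i.e. $x_i=(I-Q_i)\bar p$; the first block of $Z_i=0$ then gives $S_{\Gamma_i}(\hat R_i)=0$, i.e. $Q_i\T K=KQ_i$ --- the critical-point equation of the Morse function $Q\mapsto\langle I-Q,K\rangle$ on $\SO$. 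Thus the invariant set is the finite set of equilibria $\{(\bar g,\varphi):Q\T K=KQ,\ x=(I-Q)\bar p,\ \varphi=0\}$, which for $K$ with simple spectrum (a genericity condition on the fixed beacon/inertial-vector configuration of Assumption~\ref{assumption}) is $(I,0)$ together with three equilibria whose $Q$ is a rotation by $\pi$ about a principal axis of $K$. Near $(I,0)$ the only invariant subset of $\{\Delta V_i=0\}$ is $(I,0)$ itself, so there $V$ is a strict Lyapunov function and $(I,0)$ is asymptotically stable, whence its domain of attraction is open. At each of the three other equilibria $V$ has no local minimum (they are saddles of the Morse function), so they are unstable and, being hyperbolic, have stable manifolds of positive codimension --- sets with empty interior. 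Since every trajectory converges to one of the four equilibria, the complement of the domain of attraction of $(I,0)$ is contained in the union of those three stable manifolds and hence has empty interior; so the domain of attraction is a dense open subset of $\SE\times\bR^6$.

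\emph{Main obstacle.} The delicate part is the second-order bookkeeping in Stage (ii): once the design of $Z_i$ (equivalently of $\eta_{i+1}$) has been used to cancel the first-order $\varphi_i\T Z_i$ contributions, one must still control the residual second-order terms --- the curvature of the matrix exponential in the $Q$-update, the error committed by the approximation \eqref{eq:21} in the $x$-update, and the $+\frac{mh^2}{2(m+l)^2}\|Z_i\|^2$ term from $\Delta\mathcal T^l_i$ --- and show they are dominated by the negative $\|\varphi_i\|^2$ term, which will require an explicit upper bound on $h$. A secondary point to nail down is the genericity of $K$ (simple spectrum), needed for the undesired critical set to be finite and for the domain of attraction to be dense (not merely for it to be open, which is automatic).
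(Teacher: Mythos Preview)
Your Stage~(i) reduction is correct and matches the paper. The gap is in Stage~(ii), and it propagates into Stage~(iii).

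You claim that after combining cross-terms one gets $\Delta V_i\le -c\|\varphi_i\|^2$ ``up to dominated $O(h^2)$ curvature remainders''. This is not true and cannot be made true: the putative remainder $\frac{mh^2}{2(m+l)^2}\|Z_i\|^2$ depends on the \emph{pose} error through $y_i$ and $S_{\Gamma_i}(\hat R_i)$, not on $\varphi_i$, so it is not dominated by $-c\|\varphi_i\|^2$. Concretely, take any state with $2m\varphi_i=hZ_i$; then $\varphi_{i+1}+\varphi_i=0$ and $\Delta V_i=0$ while $\varphi_i\neq 0$. What actually happens --- and this is the paper's key observation --- is that the three pieces you wrote down combine \emph{exactly} (once one adopts the first-order approximation \eqref{eq:21}, which the paper uses throughout) into a perfect square. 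Using the factorisation $\Delta\mathcal T^l_i=\tfrac{m}{2}(\varphi_{i+1}+\varphi_i)^\top(\varphi_{i+1}-\varphi_i)$ together with $\Delta\mathcal U_i=+\tfrac{h}{2}(\varphi_{i+1}+\varphi_i)^\top Z_i$ (note the sign; your $-\tfrac h2$ is off) and the filter identity $m(\varphi_{i+1}-\varphi_i)+hZ_i=-l(\varphi_{i+1}+\varphi_i)$, one gets
\[
\Delta V_i=-\tfrac{l}{2}\|\varphi_{i+1}+\varphi_i\|^2,
\]
with no leftover $O(h^2)$ bookkeeping to do. In particular your ``main obstacle'' dissolves: there is nothing to dominate, and no explicit $h$-threshold is needed beyond the caveat already attached to \eqref{eq:21}.

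This also changes Stage~(iii). The LaSalle zero set is $\{\varphi_{i+1}+\varphi_i=0\}$, \emph{not} $\{\varphi_i=0\}$, so your invariance argument starts from the wrong premise. From $\varphi_{i+1}+\varphi_i\equiv 0$ and the recursion one gets $\varphi_i=\tfrac{h}{2m}Z_i$ (not $Z_i=0$ directly); one then uses $\omega_{i+1}+\omega_i=0\Rightarrow Q_{i+1}=Q_i$ and $v_{i+1}+v_i=0\Rightarrow y_{i+1}=y_i$, and finally $(\hat R_{i+2}+\hat R_{i+1})^\top y_i=0$ to force $y_i=0$. Only after that does one recover $S_{\Gamma_i}(\hat R_i)=0$ and $\varphi_i=0$, landing on the four-point critical set you describe. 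Your almost-global conclusion via the Morse structure of $\langle I-Q,K\rangle$ is the right endgame, but the route to the invariant set has to go through $\{\varphi_{i+1}+\varphi_i=0\}$.
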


\begin{proof}

We choose the following discrete-time Lyapunov candidate:
\begin{equation*}
    V_i := V(Q_i,h_i,\Bar{p}_i) := \mathcal{U}_i + \mathcal{T}_i,
\end{equation*}

The stability of the attitude and angular velocity error can be shown by analyzing $\Delta V_i = \Delta\mathcal{U}_i + \Delta\mathcal{T}_i$.\\

Using Assumption \ref{assumption}, we first calculate
\begin{align*}
     & \Delta\mathcal{U}^r_i = \mathcal{U}^r_{i+1} - \mathcal{U}^r_i = k_p\langle\,I-Q_{i+1},K\rangle - k_p\langle\,I-Q_i,K\rangle, \nonumber \\
     & \Delta\mathcal{U}^r_i = k_p\langle\,Q_i - Q_{i+1},K\rangle = -k_p\langle\,\Delta Q_i,K\rangle,
\end{align*}
where, $\Delta Q_i = Q_{i+1} - Q_i$. Now,
\begin{align*}
    \Delta Q_i & = Q_{i+1} - Q_i \nonumber \\
    & = Q_i\left[\hat{R}_i\exp{\left(\frac{h}{2}(\hat{\omega}_{i+1} + \hat{\omega}_i)^\times\right)}\hat{R}_i\T - I\right].
\end{align*}

Approximating $\exp{\left(\frac{h}{2}(\hat{\omega}_{i+1} + \hat{\omega}_i)^\times\right)}$ as shown in \eqref{eq:21}, we have
\begin{align*}
    \Delta Q_i 
    & = Q_i\left[\hat{R}_i\left(I + \frac{h}{2}(\hat{\omega}_{i+1} + \hat{\omega}_i)^\times\right)\hat{R}_i\T - I\right] \nonumber \\
    & = \frac{h}{2}Q_i\left(\hat{R}_i(\hat{\omega}_{i+1} + \hat{\omega}_i)^\times\hat{R}_i\T\right) \nonumber \\
    & = \frac{h}{2}Q_i\left(\hat{R}_i(\hat{\omega}_{i+1} + \hat{\omega}_i)\right)^\times .
\end{align*}

In the absence of measurement errors, we have $L_i^m = R_i\T D_i$.
Therefore,
\begin{align*}
    \Delta\mathcal{U}^r_i & = -\frac{k_ph}{2}\left\langle\,Q_i\left( \hat{R}_i \left(\omega_{i+1} + \omega_i \right) \right)^\times,K_i\right\rangle \nonumber \\
    & = -\frac{k_ph}{2}\left \langle\,R_i(\omega_{i+1} + \omega_i)^\times\hat{R}_i\T,D_iW_iD_i\T\right\rangle \nonumber\\
    & = -\frac{k_ph}{2}\left \langle\,(\omega_{i+1} + \omega_i)^\times\hat{R}_i\T,R_i\T D_iW_iD_i\T\right\rangle \nonumber\\
    & = -\frac{k_ph}{2}\left \langle\,(\omega_{i+1} + \omega_i)^\times\hat{R}_i\T,L_i^mW_iD_i\T\right\rangle.
\end{align*}

We can further simplify the above expression using $\Gamma_i = D_iW_i(L_i^m)\T$.
\begin{align}\label{eq:62}
    \Delta\mathcal{U}^r_i & = -\frac{k_ph}{2}\left \langle\,(\omega_{i+1} + \omega_i)^\times,\Gamma_i\T\hat{R}_i\right\rangle \nonumber\\
    & = -\frac{k_ph}{4}\left \langle\,(\omega_{i+1} + \omega_i)^\times,\Gamma_i\T\hat{R}_i - \hat{R}_i\T\Gamma_i\right\rangle \nonumber\\
    & = -\frac{k_ph}{2}(\omega_{i+1}+\omega_i)\T S_{\Gamma_i}(\hat{R}_i) \nonumber \\
    & = -\frac{h}{2}(\varphi_{i+1} + \varphi_i)\T\begin{bmatrix}
    k_pS_{\Gamma_i}(\hat{R}_i) \\ 0
    \end{bmatrix}
\end{align}
where, $S_{\Gamma_i}(\hat{R}_i) = \text{vex}(\Gamma_i\T\hat{R}_i - \hat{R}_i\T\Gamma_i)$. \\

Similarly we can compute the change in the translational potential energy as follows:
\begin{align*}
        \Delta\mathcal{U}^t_i & = \mathcal{U}^t(y_{i+1}) - \mathcal{U}^t(y_i) \nonumber \\
        & = (y_{i+1} - y_i)\T\kappa(y_{i+1} + y_i)
        \nonumber\\
      \Delta\mathcal{U}^t_i & = (y_{i+1} - y_i)\T\kappa(y_{i+1} + y_i).
\end{align*}

Under Assumption \ref{assumption} we have,
\begin{align}\label{eq:65}
    & y_i = Q_i\T x_i + (I - Q_i\T)\Bar{p} \nonumber \\
    & y_{i+1} = Q_{i+1}\T x_{i+1} +(I - Q_{i+1}\T)\Bar{p}.
\end{align}
Therefore the discrete-time evolution of $y_i$ is
\begin{align}\label{eq:66}
    y_{i+1} - y_i &= Q_{i+1}\T b_{i+1} - \hat{b}_{i+1} - \left( Q_i\T b_i - \hat{b}_i \right) + (Q_i - Q_{i+1})\T\Bar{p} \nonumber \\
    & = \hat{R}_{i+1}R_{i+1}\T\left(b_{i} + R_{i+1}\frac{h}{2}\left(\varv_{i} + \varv_{i+1}\right)\right) - Q_i\T b_i \nonumber \\ & \qquad - \left(\hat{b}_{i} + \hat{R}_{i+1}\frac{h}{2}\left(\hat{\varv}_{i} + \hat{\varv}_{i+1}\right) - \hat{b}_i\right) \nonumber \\
    & = (Q_{i+1} - Q_i)\T(b_i - \Bar{p}) + \hat{R}_{i+1}\frac{h}{2}\left(v_{i} + v_{i+1}\right) \nonumber \\
    & =\frac{h}{2}\left[Q_i\left(\hat{R}_i(\omega_{i+1} + \omega_i)\right)^\times\right]\T(b_i - \Bar{p}) + \hat{R}_{i+1}\frac{h}{2}\left(v_{i} + v_{i+1}\right) \nonumber \\
    & = -\frac{h}{2}\left(\hat{R}_i(\omega_{i+1} + \omega_i)\right)^\times Q_i\T(b_i - \Bar{p}) + \hat{R}_{i+1}\frac{h}{2}\left(v_{i} + v_{i+1}\right) \nonumber \\
    & = \frac{h}{2}\left(Q_i\T(b_i - \Bar{p})\right)^\times\hat{R}_i(\omega_{i+1} + \omega_i) + \hat{R}_{i+1}\frac{h}{2}\left(v_{i} + v_{i+1}\right).
\end{align}

Using \eqref{eq:66}, we obtain
\begin{align}\label{eq:67}
    \Delta\mathcal{U}^t_i & = (y_{i+1} - y_i)\T\kappa(y_{i+1} + y_i) \nonumber \\
    & = \frac{\kappa h}{2}\left[(\omega_{i+1} + \omega_i)\T\hat{R}_i\T\left(Q_i\T(\Bar{p} - b_i)\right)^\times + \left(v_{i} + v_{i+1}\right)\T\hat{R}_{i+1}\T\right](y_{i+1} + y_i) \nonumber \\
    & = \frac{h}{2}\begin{bmatrix}
    \omega_{i+1} + \omega_i \\ v_{i} + v_{i+1}
    \end{bmatrix}\T\begin{bmatrix}
    \kappa\hat{R}_i\T\left(Q_i\T(\Bar{p} - b_i)\right)^\times(y_{i+1} + y_i) \\
    \kappa\hat{R}_{i+1}\T(y_{i+1} + y_i)
    \end{bmatrix} \nonumber \\
    & = \frac{h}{2}(\varphi_{i+1} + \varphi_i)\T\begin{bmatrix}
    \kappa\hat{R}_i\T\left(Q_i\T(\Bar{p} - b_i)\right)^\times(y_{i+1} + y_i) \\
    \kappa\hat{R}_{i+1}\T(y_{i+1} + y_i)
    \end{bmatrix}.
\end{align}

Similarly we can compute the change in the kinetic energy as follows:
\begin{align}\label{eq:68}
        \Delta\mathcal{T}_i & = \mathcal{T}(\varphi_{i+1}) - \mathcal{T}(\varphi_i) \nonumber \\
        & = (\varphi_{i+1} + \varphi_i)\T\frac{m}{2}(\varphi_{i+1} - \varphi_i).
\end{align}
Using values from \eqref{eq:62}, \eqref{eq:67}, and \eqref{eq:68}, we obtain
\begin{align}
    \Delta V_i & = \Delta\mathcal{U}^r_i + \Delta\mathcal{U}^t_i + \Delta\mathcal{T}_i \nonumber\\
    & = -\frac{h}{2}(\varphi_{i+1} + \varphi_i)\T\begin{bmatrix}
    k_pS_{\Gamma_i}(\hat{R}_i) \\ 0
    \end{bmatrix} +  (\varphi_{i+1} + \varphi_i)\T\frac{m}{2}(\varphi_{i+1} - \varphi_i) \nonumber \\ & \qquad + \frac{h}{2}(\varphi_{i+1} + \varphi_i)\T\begin{bmatrix}
    \kappa\hat{R}_i\T\left(Q_i\T(\Bar{p} - b_i)\right)^\times(y_{i+1} + y_i) \\
    \kappa\hat{R}_{i+1}\T(y_{i+1} + y_i) 
    \end{bmatrix} \nonumber \\
    & = \frac{1}{2}(\varphi_{i+1} + \varphi_i)\T\Bigg\{ m(\varphi_{i+1} - \varphi_i) \Bigg. \nonumber \\ & \qquad 
    \left.+ h\begin{bmatrix}
    -k_pS_{\Gamma_i}(\hat{R}_i) + \kappa\hat{R}_i\T\left(Q_i\T(\Bar{p} - b_i)\right)^\times(y_{i+1} + y_i) \\
    \kappa\hat{R}_{i+1}\T(y_{i+1} + y_i) 
    \end{bmatrix} \right\}.
\end{align}
Using \eqref{eq:54} yields
\begin{equation*}
    \Delta V_i = \frac{1}{2}(\varphi_{i+1} + \varphi_i)\T\left\{ m(\varphi_{i+1} - \varphi_i) + hZ_i \right\}.
\end{equation*}

Therefore, $\Delta V_{i+1}$ can be written as
\begin{equation*}
    \Delta V_{i+1} = \frac{1}{2}(\varphi_{i+2} + \varphi_{i+1})\T\left\{ m(\varphi_{i+2} - \varphi_{i+1}) + hZ_{i+1} \right\}.
\end{equation*}
Substituting for $\varphi_{i+2}$ from \eqref{eq:28},
\begin{align*}
    \Delta V_{i+1} &= \frac{1}{2}(\varphi_{i+2} + \varphi_{i+1})\T\Bigg\{ -2m\varphi_{i+1} + hZ_{i+1} \Bigg. \nonumber \\
    & + \Big. \exp{\left(-\frac{h}{2}(\hat{\Omega}_{i+2}+\hat{\Omega}_{i+1})^\times\right)}\left[m(\varphi_{i+1} + \varphi_i) - \frac{h}{2}Z^\prime_i - \frac{h}{2}\eta_{i+1} \right] \Bigg\}.
\end{align*}

Now, in order for $\Delta V_{i+1}$ to be negative definite, we require 
\begin{align}\label{eq:72}
    & \exp{\left(-\frac{h}{2}(\hat{\Omega}_{i+2}+\hat{\Omega}_{i+1})^\times\right)}\left[m(\varphi_{i+1} + \varphi_i) - \frac{h}{2}Z^\prime_i - \frac{h}{2}\eta_{i+1} \right] \nonumber \\
    & \qquad -2m\varphi_{i+1} + hZ_{i+1} = -l(\varphi_{i+2} + \varphi_{i+1}),
\end{align}
where $l > 0, l \neq m$. $\Delta V_{i+1}$ simplifies to 
\begin{equation}\label{eq:75}
    \Delta V_{i+1} = -\frac{l}{2}\left(\varphi_{i+2} + \varphi_{i+1}\right)\T\left(\varphi_{i+2} + \varphi_{i+1}\right).
\end{equation}
Again substituting the value of $\varphi_{i+2}$ from \eqref{eq:28} into \eqref{eq:72}, we have
\begin{align*}
    & \exp{\left(-\frac{h}{2}(\hat{\Omega}_{i+2}+\hat{\Omega}_{i+1})^\times\right)}\left[m(\varphi_{i+1} + \varphi_i) - \frac{h}{2}Z^\prime_i - \frac{h}{2}\eta_{i+1} \right] -2m\varphi_{i+1} \nonumber \\
    & + hZ_{i+1} = -\frac{l}{m} \exp{\left(-\frac{h}{2}(\hat{\Omega}_{i+2}+\hat{\Omega}_{i+1})^\times\right)}\left[m(\varphi_{i+1} + \varphi_i) - \frac{h}{2}Z^\prime_i - \frac{h}{2}\eta_{i+1} \right].
\end{align*}
Streamlining the terms in the above equation, we obtain
\begin{align*}
    & \frac{m+l}{m}\exp{-\left(\frac{h}{2}(\hat{\Omega}_{i+2}+\hat{\Omega}_{i+1})^\times\right)}\left[m(\varphi_{i+1} + \varphi_i) - \frac{h}{2}Z^\prime_i - \frac{h}{2}\eta_{i+1} \right] \nonumber \\
    & \qquad = 2m\varphi_{i+1} - hZ_{i+1},
\end{align*}
which upon further simplification yields
\begin{align*}
    & m(\varphi_{i+1} + \varphi_i) - \frac{h}{2}Z^\prime_i - \frac{h}{2}\eta_{i+1} \nonumber \\
    & \qquad = \frac{m}{m+l}\exp{\left(\frac{h}{2}(\hat{\Omega}_{i+2}+\hat{\Omega}_{i+1})^\times\right)}\left[2m\varphi_{i+1} - hZ_{i+1}\right].
\end{align*}
Rearranging the terms above, we obtain  the value of $\eta_i$ as shown in \eqref{eq:52}. After substituting for $\eta_i$ in \eqref{eq:53}, we get
\begin{equation}\label{eq:79}
    \varphi_{i+2} = \frac{1}{m+l}\left[ (m-l)\varphi_{i+2} - hZ_{i+1} \right].
\end{equation}
\eqref{eq:79} can be rewritten in the previous time step as
\begin{equation}
    \varphi_{i+1} = \frac{1}{m+l}\left[ (m-l)\varphi_i - hZ_i \right].
\end{equation}
We can rewrite $\Delta V_i$ with the help of \eqref{eq:75} to be,
\begin{equation}
    \Delta V_{i} = -\frac{l}{2}\left(\varphi_{i+1} + \varphi_{i}\right)\T\left(\varphi_{i+1} + \varphi_{i}\right).
\end{equation}

We employ the discrete-time La-Salle invariance principle from \cite{lasalle1976stability} considering our domain ($\SE\times\bR^6$) to be a subset of $\bR^{12}$. We use Theorem 6.3 and Theorem 7.9  from Chapter-1 of \cite{lasalle1976stability}. For this we first compute,
\begin{equation}
    \mathscr{E} = \Delta V^{-1}_i(0) = \{(\Bar{g}_i,\varphi_i) \in \SE\times\bR^6\;|\; \varphi_{i+1} + \varphi_i \equiv 0\}.
\end{equation}
Now,
\begin{align}\label{eq:84}
    \varphi_{i+1} + \varphi_i = 0 \Rightarrow \omega_{i+1} + \omega_i = 0, \;\; v_{i+1} + v_{i} = 0
\end{align}

From \eqref{eq:19}, $\omega_{i+1} + \omega_i =0 \Rightarrow Q_{i+1} = Q_i$. Therefore, we have  $\Delta\mathcal{U} = 0$ whenever $\omega_{i+1} + \omega_i = 0$. This implies that the potential function, which is a Morse function, is not changing and therefore has converged to one of its stationary points. At the stationary points we have that $S_{\Gamma_i}(\hat{R}_i) = 0$. Furthermore, $I$ is the global minima of the Morse function with an almost global domain of attraction.\\

From \eqref{eq:17}, we obtain that $x_{i+1} = x_i$ and therefore from \eqref{eq:65} we also have that $y_{i+1}=y_i$. Note that we have $\Bar{g}_{i+1} = \Bar{g}_{i}$ when $Q_{i+1} = Q_{i}$ and $x_{i+1} = x_{i}$. Further substitutiting \eqref{eq:84} into \eqref{eq:53}, we have
\begin{equation*}
    \varphi_i = \frac{hZ_i}{2m} = \frac{h\kappa}{m}\begin{bmatrix}
     \hat{R}_i\T\left(Q_i\T(\Bar{p} - b_i)\right)^\times(y_i) \\
    \hat{R}_{i+1}\T(y_i) 
    \end{bmatrix}.
\end{equation*}

Similarly,
\begin{equation*}
    \varphi_{i+1} = \frac{h\kappa}{m}\begin{bmatrix}
     \hat{R}_{i+1}\T\left(Q_i\T(\Bar{p} - b_{i+1})\right)^\times(y_i) \\
    \hat{R}_{i+2}\T(y_i) 
    \end{bmatrix},
\end{equation*}
and since $\varphi_{i+1} + \varphi_i = 0$, we have $(\hat{R}_{i+1} + \hat{R}_{i+2})\T y_i = 0$ which can be rewritten as $(I + \hat{R}_{i+1}\hat{R}_{i+2}\T)y_i = 0$. It is evident that $(I + \hat{R}_{i+1}\hat{R}_{i+2}\T)$ is non-singular and therefore $y_i = 0$. Note that, $y_i = Q_i\T x_i +  (I-Q_i\T)\Bar{p}$. Therefore, when $Q_i = I$, we have $y_i = x_i$ and subsequently $x_i = 0$ with $Q_i = I$ gives us $\hat{b}_i = b_i$. Therefore the largest invariant set for the estimation error dynamics will be $\mathscr{M} = \{(\Bar{g}_i,\varphi_i) \in \SE\times\bR^6\;|\; \Bar{g}_i = I, \varphi_i = 0 \}$. Furthermore, we obtain the positive limit set as the set,
\begin{align*}
    \mathscr{I} & := \mathscr{M} \cap V_i^{-1}(0) \nonumber \\ & = \{(\Bar{g}_i,\varphi_i) \in \SE\times\bR^6\;|\; \Bar{g}_i = I, \varphi_i = 0 \}.
\end{align*} with an almost global domain of attraction. \\

This completes the proof of asymptotic stability of estimation error state $(\Bar{g}_i, \varphi_i) =
(I, 0)$ with an almost global domain of attraction.
\end{proof}

\section{Simulation Results}\label{sec:6}

In order to numerically verify this estimator, simulated true states of an aerial vehicle are produced using a six degrees of freedom (6DOF) rigid body dynamics model. The continuous-time 6DOF rigid-body dynamics equations are as follows:
\begin{align}
     \dot{b} &= R\varv \nonumber \\
     m_v\dot{\varv} & = -m_v\Omega^{\times}\varv + \phi_v \nonumber \\
     \dot{R} & = R\Omega^{\times} \nonumber \\
     J_v\dot{\Omega} & = -\Omega^{\times}J_v\Omega + \tau_v,
 \end{align}
where $m_v$ and $J_v$ are the mass and moment of inertia of the rigid body, respectively. $\phi_v$ and $\tau_v$ are the total force and torque in the body-frame, respectively. A numerical method for the simulation of 6DOF rigid body dynamics is presented in \cite{baraff1997introduction}. Several efficient algorithms to compute rigid body dynamics can be found in \cite{featherstone2014rigid}. The vehicle mass and moment of inertia are taken to be $m_v = 0.42$ kg and $J_v = \text{diag}(10^{-3}\times[51.2 \; 60.2 \; 59.6])$ kg$\cdot$m$^2$, respectively. The resultant external forces and torques applied on the vehicle are $\phi_v(t) = 10^{-3}[10 \text{cos}(0.1t) \; 2\text{sin}(0.2t) \; -2\text{sin}(0.5t)]^\text{T}$ N and $\tau_v(t) = 10^{-6}\phi_v(t)$ N$\cdot$m, respectively. The flight area is assumed to be a cubic space of size $20m\times 20m\times 20 m$ with the origin of the inertial frame located at the center of this cube. The initial attitude and position of the vehicle are
$$ R_0 = \text{expm}_{\SO}\left(\left( \frac{\pi}{4}\times\left[\frac{3}{7} \; -\frac{6}{7} \; \frac{2}{7}\right]\T \right)^\times\right),$$ $$\text{and} \;\; b_0 = [2.5 \; 0.5 \; -3]\T \; m. $$
The vehicle’s initial angular and translational velocities are
$$ \Omega_0 = [0.2 \; -0.05 \; 0.1]\T rad/s,$$ $$\text{and} \;\; \varv_0 = [-0.05 \; 0.15 \; 0.03]\T \; m/s. $$

The vehicle dynamics is simulated over a time interval of T = 60 s, with a time step-size of h = 0.01 s. The trajectory
of the vehicle over this time interval is depicted in Fig-2. The following two inertial directions, corresponding to Nadir and Earth’s magnetic field direction are measured by the inertial sensors on the vehicle:
$$ d_1 = [0 \; 0 \; -1]\T, \qquad d_2 = [0.1 \; 0.975 \; -0.2]\T. $$
For optical measurements, 8 beacons are placed at the corners of the room. It has been assumed that at least two beacons are measured at each time instant. The observed directions in the body-fixed frame are simulated with the help of the aforementioned true states. The true quantities are disturbed by bounded, random noise with zero mean to simulate realistic measurements. Based on coarse attitude sensors like sun sensors, optical sensors and magnetometers, a random noise bounded in magnitude by $2.4^\circ$ is added to the matrix $L = R\T D$ to generate measured $L^m$. Similarly, random noise bounded in magnitude by $0.97^\circ/s$ and $0.025m/s$, which are close to actual noise levels of coarse rate gyros, are added to $\Omega$, $\varv$ to generate measured $\Omega^m$ and $\varv^m$ respectively. The scalar gain $m = 1.5$ and the dissipation term is chosen to be, $l = 0.1$. Furthermore, the value of the gains $k_p$ and $\kappa$ are chosen to be $k_p = 150$ and $\kappa = 100$. The state estimates are initialized at:
$$ \hat{g} = I, \quad \hat{\Omega}_0 = [0.1 \; 0.45 \; 0.05]\T rad/s,$$ $$\text{and} \;\; \hat{\varv}_0 = [2.05 \; 0.64 \; 1.29]\T \; m/s.$$\\
It is to be noted that the estimation scheme presented in Section \ref{sec:5} is a discrete-time almost global asymptotically stable estimation scheme for simultaneous estimation of position, attitude, linear velocity, and angular velocity. The filter equations in \eqref{eq:53} are implicit and therefore have to be solved simultaneously at each time instant in order to obtain estimates for that time instant. We start with a set of random initial state estimates as given above. With the help of the procedure mentioned in the previous paragraph, true values of inertial and optical directions, angular velocity, and linear velocity are disturbed with random noise to generate realistic measurements and these measurements are fed into the filter equations at each time instant in real-time. The equations in \eqref{eq:53} are simultaneously solved with the help of \verb++fsolve available in Matlab at each time instant to generate estimated values of state trajectories for 60s. The estimates are then compared with initially generated true state trajectories. The position and attitude estimation error are shown in fig. \ref{fig:3a} and fig. \ref{fig:3b} respectively. We observe that both the position and the attitude errors converge to a bounded set around the equilibrium after about 30s. The size of the bounded set is dictated by the noise magnitudes. The corresponding velocity error plots are shown in fig. \ref{fig:3c} and fig. \ref{fig:3d} respectively, and show desired performance. It is important to note that we are not required to assume the noise distribution properties. 

\begin{figure}
	\centering
	\includegraphics[scale = 0.65]{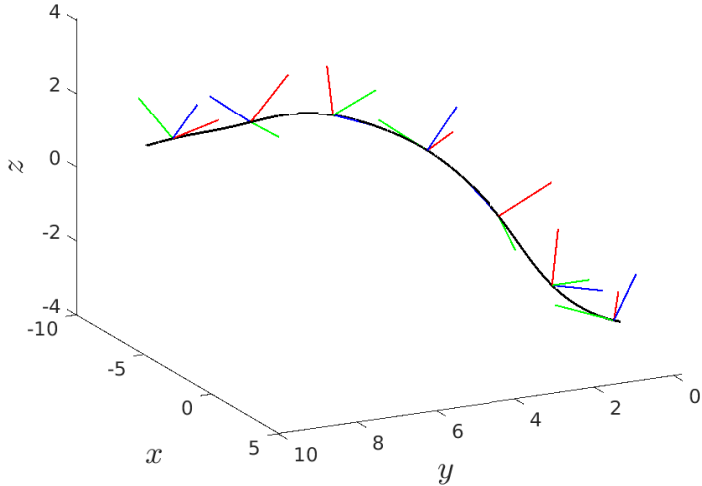}
	\vspace*{-4mm}
	\caption{Trajectory of the body}
	\label{trajectory}
\end{figure}

\begin{figure}
\begin{subfigure}{0.5\textwidth}
	\centering
	\includegraphics[width = 1\textwidth]{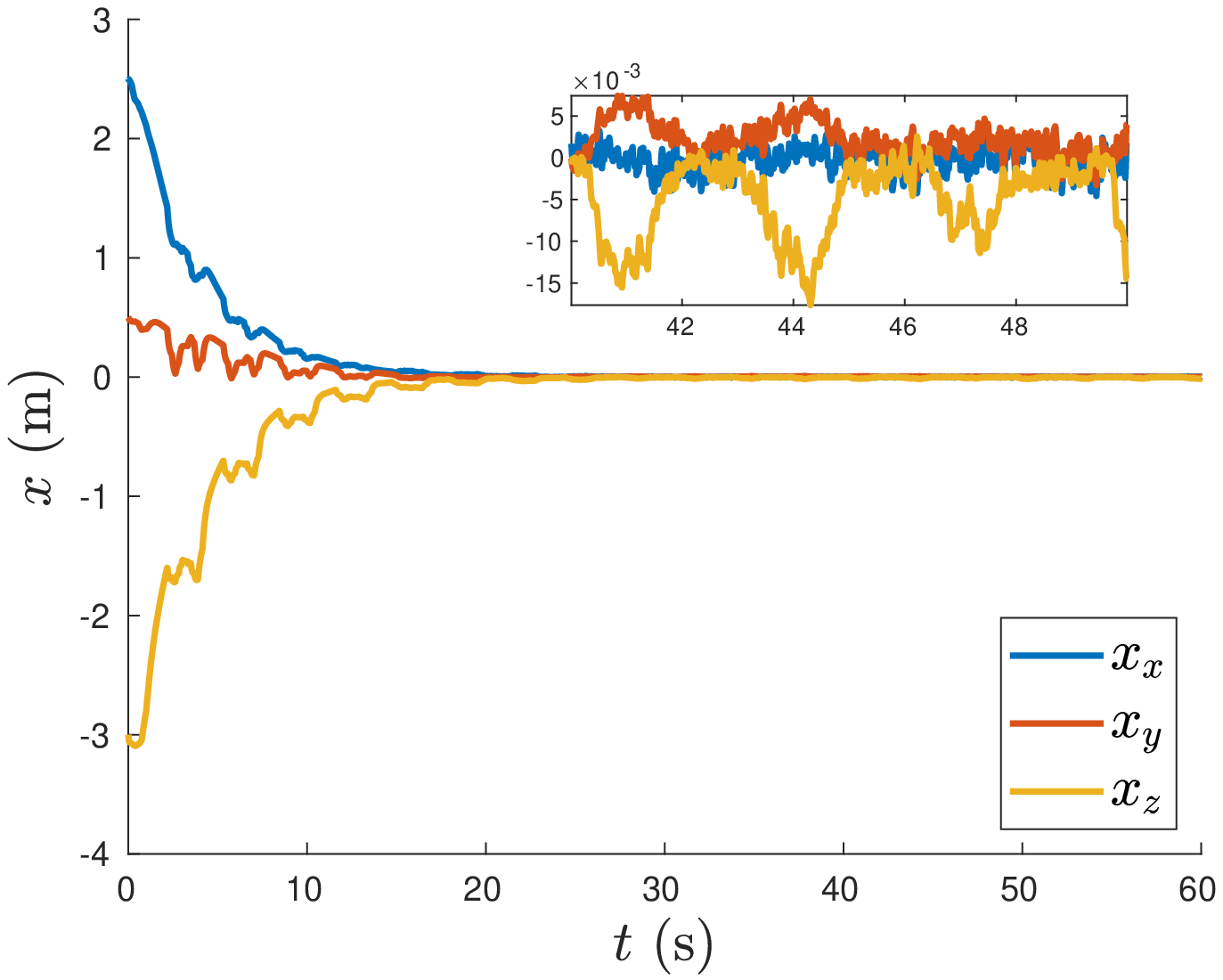}
	\caption{Position estimation error}
	\label{fig:3a}
\end{subfigure}
\begin{subfigure}{0.5\textwidth}
	\centering
	\includegraphics[width = 1\textwidth]{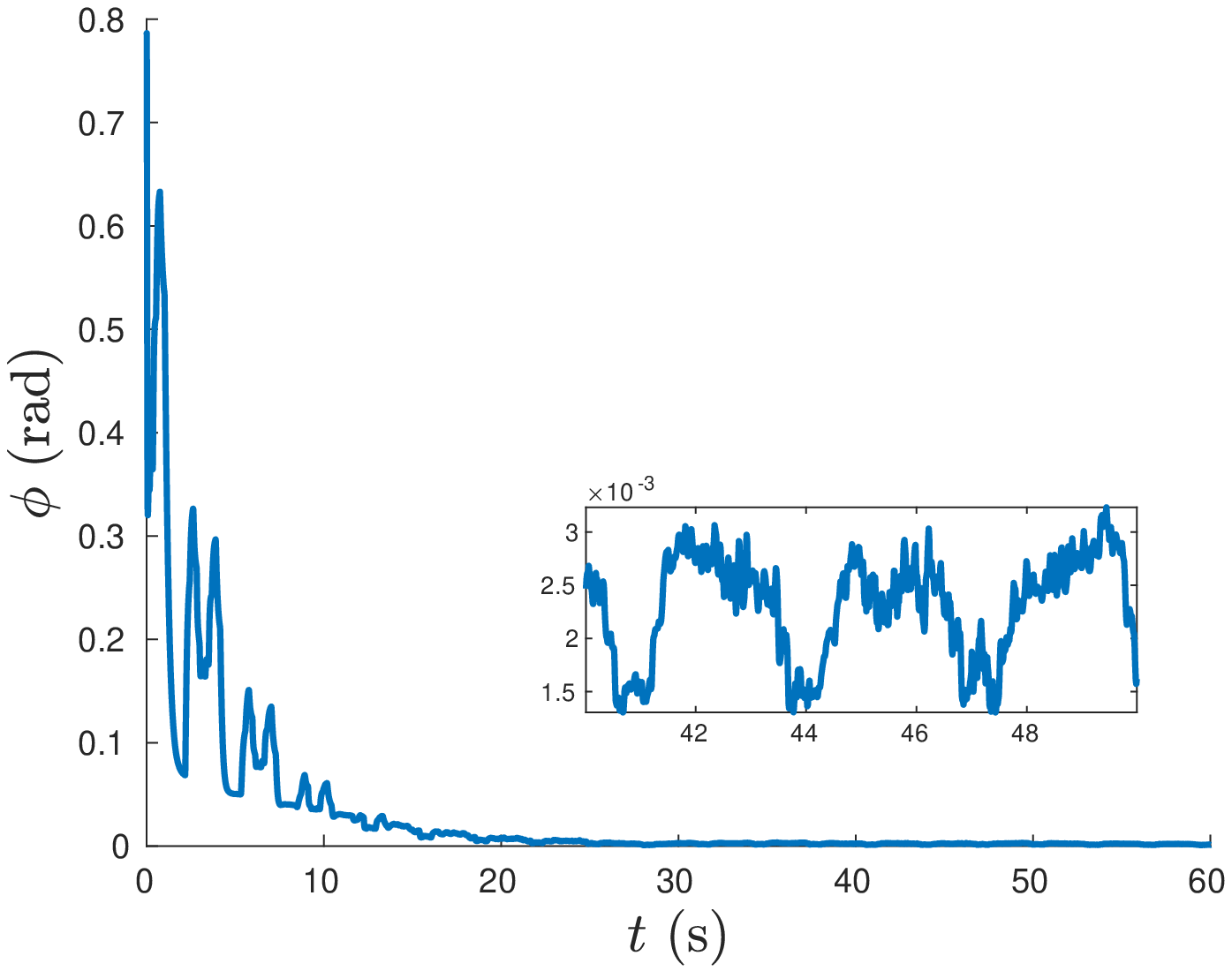}
	\caption{Principle angle of the attitude estimation error}
	\label{fig:3b}
\end{subfigure}
\begin{subfigure}{0.5\textwidth}
	\centering
	\includegraphics[width = 1\textwidth]{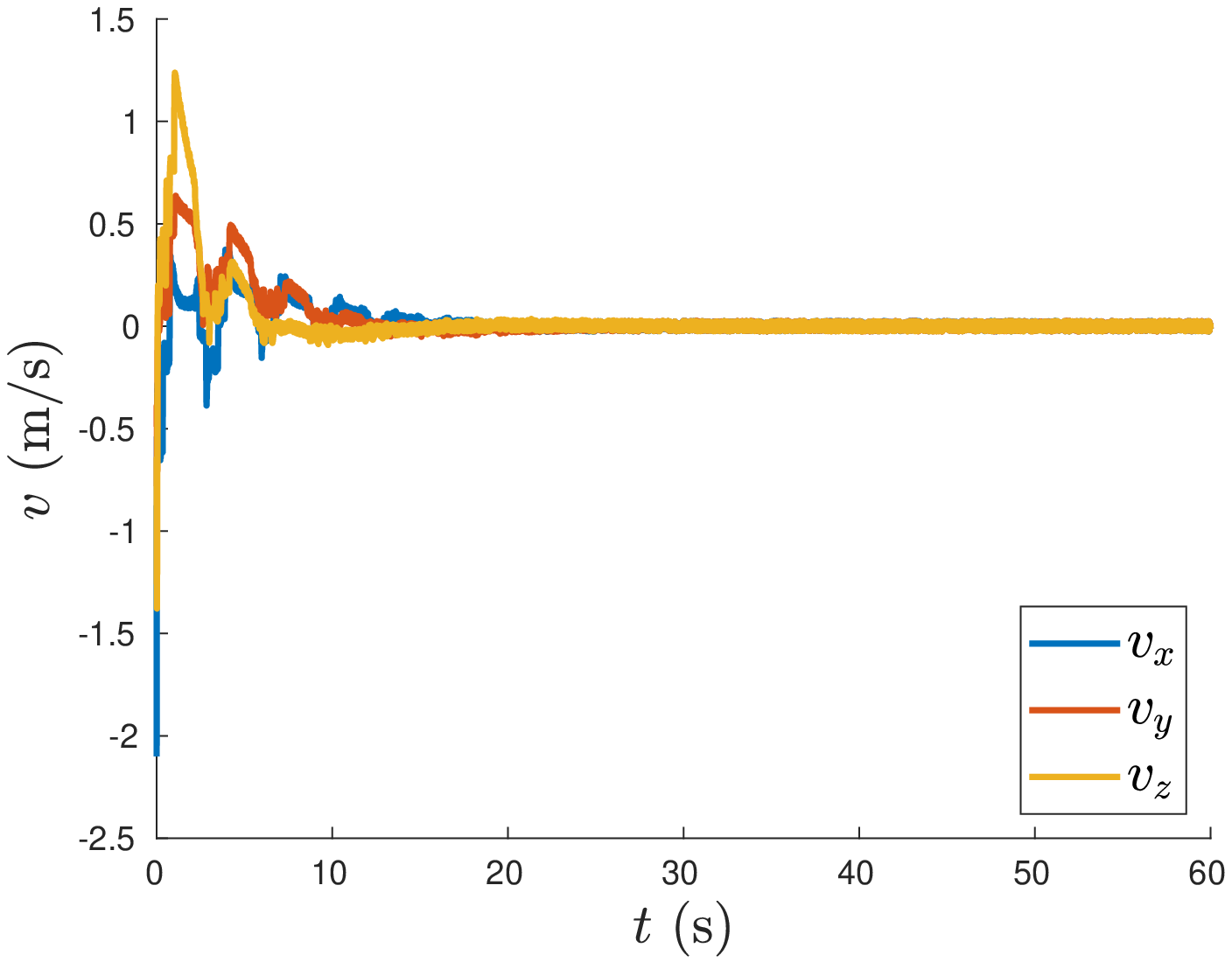}
	\caption{Translational velocity estimation error}
	\label{fig:3c}
\end{subfigure}
\begin{subfigure}{0.5\textwidth}
	\centering
	\includegraphics[width = 1\textwidth]{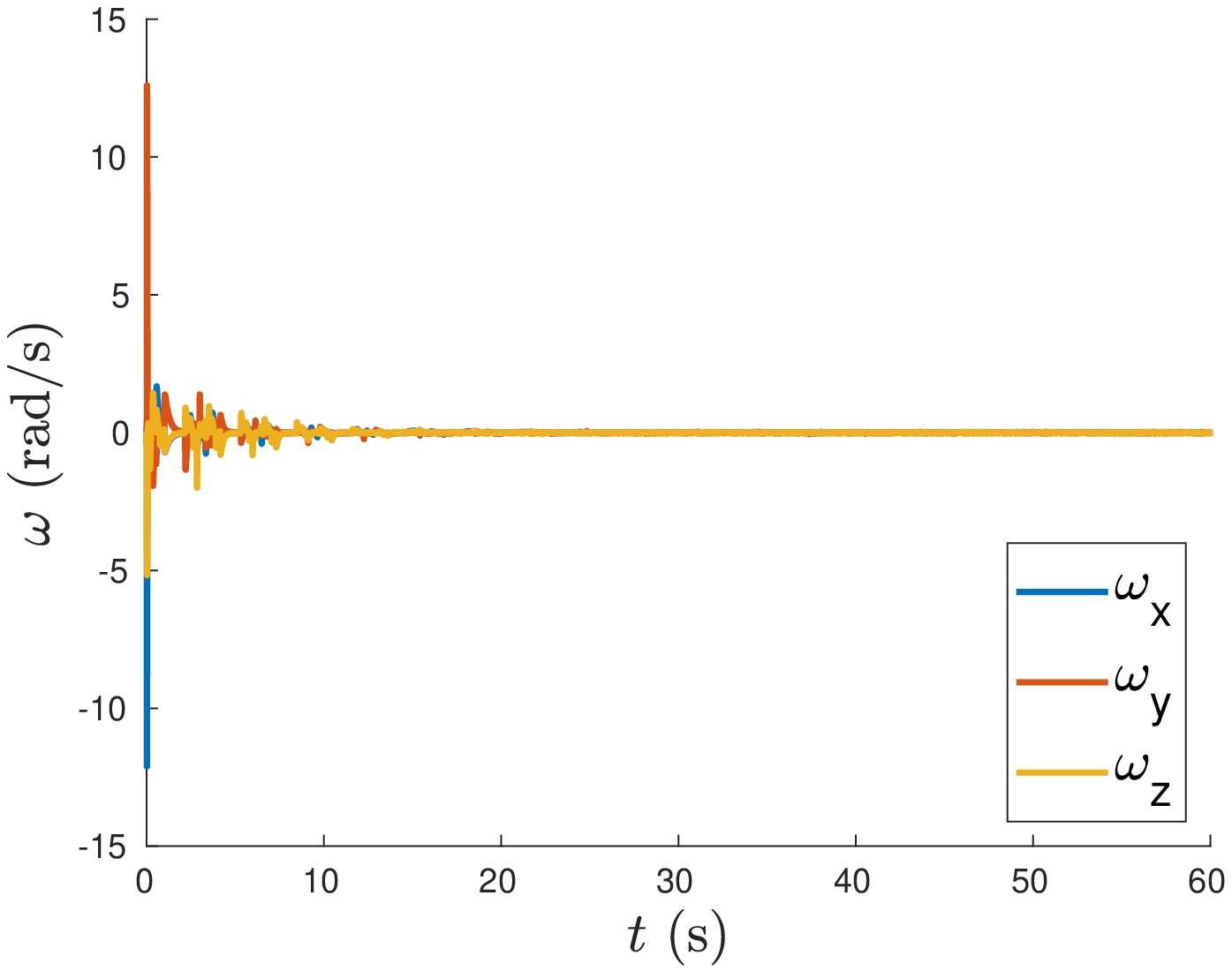}
	\caption{Angular velocity estimation error}
	\label{fig:3d}
\end{subfigure}
\caption{}
\label{fig3}
\end{figure}


\section{Conclusions}\label{sec:7}
An asymptotically stable and optimal discrete-time rigid body pose estimator has been presented in this work. Suitable artificial potential energy and kinetic energy-like functions of state estimation errors were used to construct a Lagrangian in discrete time. The discrete Lagrange-d'Alembert principle was applied to this Lagrangian to obtain an optimal filtering scheme. The dissipation terms were calculated through a discrete-Lyapunov analysis carried out on a Morse-Lyapunov function that corresponds to the total energy function constructed from the kinetic and potential energy-like terms used to construct the Lagrangian. The theoretical assertions are supported through realistic numerical simulations. It has been observed that the estimation errors converge to a bounded neighborhood of the true states. The rates of convergence and domain of convergence can be controlled by changing scalar gains associated with the potential and kinetic energy-like terms that make up the Lagrangian.
Future work in this domain would look into designing an explicit filtering scheme by constructing a suitable cost function, so that numerical computations are faster and simpler for onboard implementation.


%
%

\bibliographystyle{spmpsci}      
\bibliography{bibliography}   


\end{document}